\newcommand{\xVar}{\boldsymbol{x}}
\newcommand{\ID}{i}
\newcommand{\dimXVar}{n}
\newcommand{\dimID}{m}
\newcommand{\const}{d}
\newcommand{\constC}{c}
\newcommand{\ineqVar}{\lambda}
\newcommand{\ineqVars}{\boldsymbol{\ineqVar}}
\newcommand{\eqVar}{s}
\newcommand{\KKTVars}{(\ineqVars,\,\eqVar)}
\newcommand{\KKTVar}{(\ineqVar,\,\eqVar)}
\newcommand{\targetFcnHead}{J}
\newcommand{\targetFcn}{\targetFcnHead(\xVar,\,\ID)}
\newcommand{\realProbHead}{p}
\newcommand{\realProb}{\realProbHead(\ID)}
\newcommand{\probHead}{\hat{p}}
\newcommand{\prob}{\probHead(\ID)}
\newcommand{\refProbHead}{\realProbHead_0}
\newcommand{\refProb}{\refProbHead(\ID)}
\newcommand{\densRatioHead}{r}
\newcommand{\densRatio}{\densRatioHead_\ID}
\newcommand{\inner}{\expectation{\prob}{\targetFcn}}
\newcommand{\innerMax}{\max_{\probHead\in\ambiguity}\quad\inner}
\newcommand{\innerHat}{\max_{\probHead\in\hatAmbiguity}\quad\inner}
\newcommand{\innerDual}{\expectation{\refProb}{\dualExtended}}
\newcommand{\innerMin}{\inf_{\KKTVars\in\realPlus^\dimID\times\real}\quad\innerDual}
\newcommand{\lagrangianHead}{l_{\xVar}}
\newcommand{\lagrangian}{\lagrangianHead(\densRatioHead_1,\,\cdots,\,\densRatioHead_\dimID,\,\ineqVars,\,\eqVar)}
\newcommand{\lagrangeDualHead}{g_{\xVar}}
\newcommand{\lagrangeDual}{\lagrangeDualHead(\ineqVars,\,\eqVar)}
\newcommand{\dualHead}{h_\mathrm{DR}}
\newcommand{\dualExtendedHead}{\tilde{h}_\mathrm{DR}}
\newcommand{\dual}{\dualHead(\xVar,\,\ID,\,[\ineqVars]_\ID,\,\eqVar)}
\newcommand{\dualExtended}{\dualExtendedHead(\xVar,\,\ID,\,[\ineqVars]_\ID,\,\eqVar)}
\newcommand{\ambiguityKnapsack}{\mathcal{Z}}
\newcommand{\ambiguityC}{\ambiguityKnapsack_\constC}
\newcommand{\targetVaR}{\VaR{\beta}{\refProb}{\targetFcn}}
\newcommand{\targetCVaR}{\CVaR{\beta}{\refProb}{\targetFcn}}
\newcommand{\targetHatCVaR}{\hatCVaR{\beta}{\refProb}{\targetFcn}}
\newcommand{\targetTildeCVaR}{\tildeCVaR{\beta}{\refProb}{\targetFcn}}
\newcommand{\innerCVaR}{(1+\const)\,\expectation{\refProb}{\max\{0,~\targetFcn-\eqVar\}}+\eqVar}
\newcommand{\innerBetaCVaR}{(1-\beta)^{-1}\,\expectation{\refProb}{\max\{0,~\targetFcn-\eqVar\}}+\eqVar}
\newcommand{\innerCVaRFcnHead}{F_\beta}
\newcommand{\innerCVaRFcn}{\innerCVaRFcnHead(\xVar,\,\eqVar)}
\newcommand{\innerMaxSL}{\max_{\probHead\in\ambiguity}\inner}
\newcommand{\dualHeadL}{h_\mathrm{L^2}}
\newcommand{\dualExtendedHeadL}{\tilde{h}_\mathrm{L^2}}
\newcommand{\dualL}{\dualHeadL(\xVar,\,\ineqVar,\,\eqVar)}
\newcommand{\dualExtendedL}{\dualExtendedHeadL(\xVar,\,\ineqVar,\,\eqVar)}
\newcommand{\innerDualL}{\dualExtendedL}
\newcommand{\lagrangianHeadL}{l_{\xVar}}
\newcommand{\lagrangianL}{\lagrangianHeadL(\densRatioHead_1,\,\cdots,\,\densRatioHead_\dimID,\,\ineqVar,\,\eqVar)}
\newcommand{\lagrangeDualHeadL}{g_{\xVar}}
\newcommand{\lagrangeDualL}{\lagrangeDualHeadL(\ineqVar,\,\eqVar)}
\newcommand{\innerMinL}{\inf_{\KKTVar\in\realPlus\times\real}\quad\innerDualL}
\newcommand{\ambiguityTV}{\ambiguity_\mathrm{TV}}
\newcommand{\ambiguityL}{\ambiguity_\mathrm{L^2}}
\newcommand{\ambiguityDR}{\ambiguity_\mathrm{DR}}
\newcommand{\relint}{\mathrm{relint}}
\newcommand{\vect}{\mathrm{vec}}
\newcommand{\eye}{\boldsymbol{I}}
\newcommand{\diag}{\mathrm{diag}}
\newcommand{\lnFcn}[1]{\ln\left(#1\right)}
\newcommand{\expFcn}[1]{\exp\left(#1\right)}
\newcommand{\maxFcn}[1]{\max\left\{#1\right\}}
\newcommand{\anyVector}{\boldsymbol{v}}
\newcommand{\anyMatrix}{\boldsymbol{C}}
\newcommand{\anySet}{\mathcal{S}}
\newcommand{\anyFcn}{f}
\newcommand{\anyRandom}{s}
\newcommand{\anyEvent}{A}
\newcommand{\real}{\mathbb{R}}
\newcommand{\realPlus}{[0,\,\infty)}
\newcommand{\realPlusPlus}{(0,\,\infty)}
\newcommand{\setXVar}{\mathcal{X}}
\newcommand{\setID}{{\Omega}}
\newcommand{\ambiguity}{\mathcal{W}}
\newcommand{\hatAmbiguity}{\hat{\mathcal{W}}_\mathrm{DR}}
\newcommand{\probabilityHead}{\mathcal{P}}
\newcommand{\probability}{\probabilityHead(\setID)}
\newcommand{\expectation}[2]{\mathbb{E}_{#1}\left[#2\right]}
\newcommand{\variance}[2]{\mathbb{V}_{#1}\left[#2\right]}
\newcommand{\distribution}[2]{\mathbb{P}_{#1}\left[#2\right]}
\newcommand{\VaR}[3]{#1\text{-VaR}_{#2}\left[#3\right]}
\newcommand{\CVaR}[3]{#1\text{-CVaR}_{#2}\left[#3\right]}
\newcommand{\hatCVaR}[3]{#1\text{-}\hat{\text{CVaR}}_{#2}\left[#3\right]}
\newcommand{\tildeCVaR}[3]{#1\text{-}\tilde{\text{CVaR}}_{#2}\left[#3\right]}
\theoremstyle{definition}
\newtheorem{dfn}{Definition}
\newtheorem{prop}[dfn]{Proposition}
\newtheorem{lem}[dfn]{Lemma}
\newtheorem{thm}[dfn]{Theorem}
\newtheorem{cor}[dfn]{Corollary}
\newtheorem{rem}[dfn]{Remark}
\newcommand{\argmin}{\operatornamewithlimits{argmin}}
\title{\LARGE \bf 
\textcolor{black}{
Explicit Reformulation of Discrete Distributionally Robust Optimization Problems
}
}
\author{Yuma Shida$^{1,*}$ and Yuji Ito$^{1}$
\thanks{*This paper is submitted to a journal for possible publication. The copyright of this paper may be transferred without notice, after which this version may no longer be accessible. The materials of this paper have been published in part in the conference proceeding \cite{11107818}. This work was not supported by any organization.}
\thanks{$^{1}$Toyota Central R\&D Labs., Inc., 41-1, Yokomichi, Nagakute, Aichi, 480-1192, Japan.
        {\tt\small \{Yuma.shida.fw, ito-yuji\}@mosk.tytlabs.co.jp}
}%
\thanks{$^{*}$Permanent affiliation: Toyota Motor Corporation, 1, Toyota-Cho, Toyota City, Aichi, 471-8571, Japan. {\tt\small yuma\_shida@mail.toyota.co.jp}
}%
}
\begin{document}

\maketitle
\thispagestyle{empty}
\pagestyle{empty}

\begin{abstract}
\textcolor{black}{
Distributionally robust optimization (DRO) is an effective framework for controlling real-world systems with various uncertainties, typically modeled using distributional uncertainty balls. However, DRO problems often involve infinitely many inequality constraints, rendering exact solutions computationally expensive. 
In this study, we propose a discrete DRO (DDRO) method that significantly simplifies the problem by reducing it to a single trivial constraint. Specifically, the proposed method utilizes two types of distributional uncertainty balls to reformulate the DDRO problem into a single-layer smooth convex program, significantly improving tractability. 
Furthermore, we provide practical guidance for selecting the appropriate ball sizes. 
The original DDRO problem is further reformulated into two optimization problems: one minimizing the mean and standard deviation, and the other minimizing the conditional value at risk (CVaR). These formulations account for  the choice of ball sizes, thereby enhancing the practical applicability of the method. 
The proposed method was applied to a distributionally robust patrol-agent design problem, identifying a Pareto front in which the mean and standard deviation of the mean hitting time varied by up to 3\% and 14\%, respectively, while achieving a CVaR reduction of up to 13\%.
}
\end{abstract}

\section{Introduction}
\textcolor{black}{
Real-world systems are exposed to various uncertainties emerging from both natural and societal factors. 
\textcolor{black}{
For example, security robots \cite{duan2021markov,diaz2023distributed} are used for surveillance and protection against threats such as human-caused theft, natural destruction, and  accidents. Two widely used control approaches to manage these uncertainties are stochastic optimal control (SOC) \cite{bertsekas1996stochastic,crespo2003stochastic} and robust control (RC) \cite{sastry2011adaptive,slotine1985robust,scherer2001theory}. 
}
SOC minimizes the expected control costs when a reliable stochastic model of uncertainty is available. However, when such a model is difficult to obtain, alternative methods are required. 
Alternatively, RC minimizes the worst-case value of control costs to address broad uncertainty. Nonetheless, its inherent conservativeness can lead to excessively high control costs. 
}


\textcolor{black}{
Distributionally robust optimization (DRO) has recently emerged as a new approach for enhancing the robustness of control methods and reducing unnecessary conservatism. 
In this method, optimization is typically realized by modeling uncertainties not as the worst-case value, but the worst-case probability distribution within statistical uncertainty sets, often referred to as uncertainty balls. The DRO minimizes the expected value of costs under the worst-case probability distribution \cite{taskesen2024distributionally,liu2023data,yang2020wasserstein,nishimura2021rat,nguyen2023distributionally,coulson2021distributionally}, even if the true distribution of a system is unknown. 
Balls are defined by statistical distances, such as $\phi$-divergence \cite{liu2023data,nishimura2021rat,hu2013kullback} and optimal transport distances \cite{taskesen2024distributionally,yang2020wasserstein,nguyen2023distributionally,pilipovsky2024distributionally,coulson2021distributionally,gao2023distributionally,shafieezadeh2019regularization,cherukuri2022data,luo2017decomposition,mohajerin2018data}. 
This approach improves the robustness of SOC methods, which often rely on specific assumptions regarding uncertainty, such as the well-known Gaussian noise assumption \cite{nishimura2021rat}. Other problem settings have been explored, including distributionally robust objectives and constraints, particularly those related to value at risk (VaR)  \cite{van2015distributionally,pilipovsky2024distributionally,kishida2022risk}.
}

\textcolor{black}{
Discrete DRO (DDRO) is recognized both as a tractable method for discretizing DRO \cite{liu2019discrete}, and as a framework for problems that inherently involve discrete stochastic modeling \cite{farokhi2023distributionally}. 
DRO problems can be addressed using duality principles \cite{mohajerin2018data,gao2023distributionally,yang2020wasserstein,shafieezadeh2019regularization,miao2021data,wiesemann2014distributionally}; however, duality principles often result in  semi-infinite programming (SIP) formulation \cite{reemtsen2013semi,yang2020wasserstein,shafieezadeh2019regularization,cherukuri2022data,mehrotra2014cutting,luo2017decomposition,mohajerin2018data,wiesemann2014distributionally},  which involves infinitely many constraints and renders obtaining exact solutions computationally expensive. 
Discretizing DRO is effective for approximately solving such SIP \cite{liu2023data,reemtsen1991discretization}.  
For example, \cite[Section 5]{mehrotra2014cutting} demonstrates that SIP can be reformulated into linear programming using DDRO methods, such as discretizing probability distributions and using a finite uncertainty set.
DDRO is particularly effective when real-world systems have uncertainties represented by discrete distributions, such as categorical sets and finite spaces \cite{farokhi2023distributionally,zhang2021efficient}. For instance, robotic surveillance studies have used discrete modeling of finite locations \cite{duan2021markov,diaz2023distributed}. Existing studies \cite{farokhi2023distributionally,zhang2021efficient} have considered DRO problems using balls defined over discrete distributions, such as the Kantorovich ball and the total variation (TV) ball, the latter being a special case of the optimal transport ball \cite{villani2009optimal}. 
}

\textcolor{black}{
Previous studies on DDRO have identified two main challenges. 
The existing formulations  \cite{farokhi2023distributionally,zhang2021efficient}   either involve non-trivial inequality constraints or are not expressed as single-layer smooth convex programming. Additionally, in practical applications, determining the appropriate balls remains a significant challenge. 
Developing a theoretical framework that clarifies the effect of ball size would be effective. 
}


\textcolor{black}{
In this study, we developed a more tractable  formulation of DDRO problems than those used in previous studies. Our proposed method reformulates min-max optimization problems in DDRO into single-layer smooth convex programming with trivial constraints associated with weighted L2 and density-ratio (DR) balls. 
Additionally, we derive physically interpretable values that provide insights into how to determine the ball size. 
Specifically, we demonstrate that choosing weighted L2 balls in DDRO is equivalent to minimizing the weighted sum of the expected cost and its standard deviation and choosing the DR balls in DDRO is equivalent to minimizing conditional VaR (CVaR) \cite{rockafellar2000optimization}. 
The main contributions of this study are summarized as follows.
\begin{itemize}
\item Solvability: The proposed method for solving DDRO problems with weighted L2 balls and DR balls can be reformulated into  single-layer smooth convex programming with trivial constraints rendering them solvable. These results are presented in Theorems \ref{thm:strongDualL} and \ref{thm:strongDual} in Section \ref{sec:mainResult2}. 
\item Interpretability: Our proposed method clarifies how the size of the ball affects DDRO based on physical values related to the weighted L2 and DR balls. 1) Minimizing expectation and standard deviation: We demonstrate that solving DDRO problems associated with weighted L2 balls is equivalent to minimizing the weighted sum of the expected cost and its standard deviation. This aligns with conventional control theories, such as risk-sensitive control \cite{jacobson2003optimal, whittle1981risk}, which are compatible with minimizing both the expectation and higher-order moments, such as standard deviation. 
2) Minimizing CVaR: We show that solving DDRO problems associated with DR balls is equivalent to minimizing the CVaR of the control cost function. These results are shown in Theorems \ref{thm:variance}, \ref{thm:CVaR}, and \ref{thm:knapsack}, and Corollary \ref{cor:variance} in Section \ref{sec:mainResult1}.
\item Demonstration: We demonstrate that the proposed method can be solved as a general convex programming problem through numerical experiments on patroller-agent design problems from \cite{diaz2023distributed}. This design is adapted to fit the DDRO framework. 
\end{itemize}
}

\textcolor{black}{
This study is an extended version of our previous conference paper\cite{11107818}. It investigates the solvability issues related to weighted L2 balls and enhances the interpretability of weighted L2 and DR balls. 
In contrast, the conference paper\cite{11107818} only addressed the solvability issues related to DR balls. 
}

\section*{Notation}

\textcolor{black}{
We use the following notations:
\begin{itemize}
\item $\eye_a$: Identity matrix of size $a\times a$.
\item $[\anyVector]_j$: $j$-th component of a vector $\anyVector\in\real^a$.
\item $\diag(\anyVector) \coloneqq 
\begin{bmatrix}
[\anyVector]_{1} & & 0 \\
 & \ddots & \\
0 & & [\anyVector]_{a}
\end{bmatrix}
$: Diagonal matrix formed from the components of a vector $\anyVector\in\real^{a}$.
\item $[\anyMatrix]_{j,\,k}$: Element in the $j$-th row and $k$-th column of a matrix $\anyMatrix\in\real^{a\times b}$.
\item $\vect(\anyMatrix) \coloneqq
[
[\anyMatrix]_{1,\,1}  \cdots  [\anyMatrix]_{a,\,1}  \cdots  [\anyMatrix]_{1,\,b}  \cdots  [\anyMatrix]_{a,\,b} 
]
^\top$: Vectorization of a matrix $\anyMatrix\in\real^{a\times b}$, stacking its columns into a single vector. 
\item $\relint(\anySet)$: Relative interior of a set $\anySet\subseteq\real^a$.
\item 
\textcolor{black}{
$\displaystyle\probabilityHead(\anySet) \coloneqq \{\realProbHead:\anySet\rightarrow[0,\,1]\mid\sum_{\anyRandom\in\anySet}\realProbHead(\anyRandom)=1\}$: Set of all probability mass functions of a discrete random variable $\anyRandom\in\anySet$ over a finite set $\anySet$.
}
\item 
\textcolor{black}{
$\expectation{\realProbHead(\anyRandom)}{\anyFcn(\anyRandom)}$: Expectation of $\anyFcn(\anyRandom)$ with respect to a random variable $\anyRandom$ under a distribution $\realProbHead(\anyRandom)$.
}
\item 
\textcolor{black}{
$\expectation{\realProbHead(\anyRandom)}{\anyFcn(\anyRandom)\mid\anyEvent}$: Conditional expectation of $\anyFcn(\anyRandom)$ 
given that $\anyEvent$ holds. 
}
\item 
\textcolor{black}{
$\variance{\realProbHead(\anyRandom)}{\anyFcn(\anyRandom)} \coloneqq \expectation{\realProbHead(\anyRandom)}{(\anyFcn(\anyRandom)-\expectation{\realProbHead(\anyRandom)}{\anyFcn(\anyRandom)})^2}$: Variance of $\anyFcn(\anyRandom)$ under a distribution $\realProbHead(\anyRandom)$.
}
\item 
\textcolor{black}{
$\distribution{\realProbHead(\anyRandom)}{\anyRandom\in\anySet}$: Probability that $\anyRandom\in\anySet$ under a distribution $\realProbHead(\anyRandom)$. If the distribution used is clear, we note it as $\distribution{}{\anyRandom\in\anySet}$.
}
\item 
\textcolor{black}{
$\distribution{\realProbHead(\anyRandom)}{\anyRandom_1\in\anySet_1\mid\anyRandom_2\in\anySet_2}$: Conditional probability that $\anyRandom_1\in\anySet_1$ given $\anyRandom_2\in\anySet_2$ under a distribution $\realProbHead(\anyRandom)$. If the distribution used is clear, we note it as $\distribution{}{\anyRandom_1\in\anySet_1\mid\anyRandom_2\in\anySet_2}$. 
}
\item 
\textcolor{black}{
$\VaR{\beta}{\realProbHead(\anyRandom)}{\anyFcn(\anyRandom)} \coloneqq \inf\{\alpha\in\real\mid\distribution{\realProbHead(\anyRandom)}{\anyFcn(\anyRandom)\le\alpha}\ge\beta\}$: VaR of $\anyFcn(\anyRandom)$ at level $\beta\in[0,1]$ 
under a distribution $\realProbHead(\anyRandom)$.
}
\item 
\textcolor{black}{
$\CVaR{\beta}{\realProbHead(\anyRandom)}{\anyFcn(\anyRandom)} \coloneqq \expectation{\realProbHead(\anyRandom)}{\anyFcn(\anyRandom)\mid\anyFcn(\anyRandom)\ge\VaR{\beta}{\realProbHead(\anyRandom)}{\anyFcn(\anyRandom)}}$: CVaR of $\anyFcn(\anyRandom)$ at level $\beta\in[0,1]$ 
under a distribution $\realProbHead(\anyRandom)$.
}
\end{itemize}
}

\section{Target Systems and Problems Setting}

\textcolor{black}{
We consider a target system that involves a decision variable $\xVar\in\setXVar\subseteq\real^\dimXVar$ in a particular set $\setXVar$ and a random variable $\ID\in\setID$, where  the set is $\setID=\{1,\,2,\,\cdots,\,m\}$. The probability distribution $\realProbHead\in\probability$ of $\ID$ is assumed to be unknown but lies within a ball $\ambiguity\subseteq\probability$. The performance of the system is evaluated based on the expectation of a cost function $\targetFcn$, denoted as $\expectation{\realProb}{\targetFcn}$. The cost function is defined as $\targetFcnHead:\setXVar\times\setID\to\mathbb{R}$ and represents the objective to be minimized. For example, it may represent the time required to complete a process in an industrial application, such as robotic control. The definition of the cost function indicates that for each $\xVar$, $\expectation{\realProb}{\targetFcn} < \infty$. This study considers the following problem.
}


\textcolor{black}{
\textit{DDRO problem:} Design a decision variable that minimizes the worst-case expectation of the cost function of the target system within a given ball. 
\begin{equation}\label{eq:DROC}
\min_{\xVar\in\setXVar}\quad\innerMax.
\end{equation}
Here, the ball $\ambiguity$ is defined in Section \ref{sec:method}. 
\begin{rem}[Difficulty in solving DDRO problems]\label{rem:difficulty}
The DDRO problem in (\ref{eq:DROC}) is a two-layer min-max optimization problem and not a single-layer optimization problem. Directly solving this min-max optimization problem remains challenging. One approach to solving this problem is to reformulate the inner maximization problem as a minimization problem \cite{farokhi2023distributionally}. 
However, the study in \cite{farokhi2023distributionally} showed that non trivial constraints remain. 
These constraints are not infinite, but rather finite many, yet sufficient to scale with the size of the support set of the probability distribution. 
Another approach is to solve the problem as a saddle-point problem rather than as a single-layer optimization problem \cite{zhang2021efficient}. The study proposes a new algorithm that finds a saddle point and guarantees an $\mathcal{O}(1/\,\epsilon)$ iteration complexity. Here, $\epsilon>0$ is the required accuracy. However, this complexity is less efficient compared to the $\mathcal{O}(\log(1/\,\epsilon))$ iteration complexity typically achieved in convex optimization \cite{boyd2004convex}. 
Furthermore, existing studies \cite{farokhi2023distributionally,zhang2021efficient}  have focused primarily on specific types of balls: the Kantorovich ball and TV ball, which may limit the generality of their approaches.
\end{rem}
}

\section{Proposed Method}\label{sec:method}

\textcolor{black}{
To address the challenges in solving min-max optimization problems in Remark \ref{rem:difficulty}, we propose explicit reformulations that transform them into single-layer smooth convex optimization problems. We consider two types of balls: a weighted L2 ball, $\ambiguity=\ambiguityL$ and a DR ball, $\ambiguity=\ambiguityDR$. These balls are defined as follows:
\begin{equation}\label{eq:ambiguityL}
\ambiguityL = \{\probHead\in\probability\mid\sqrt{\expectation{\refProb}{(\densRatio-1)^2}}\le\const\}, 
\end{equation}
\begin{equation}\label{eq:ambiguity}
\ambiguityDR = \{\probHead\in\probability\mid\forall \ID\in\setID, ~ \densRatio\le1+\const\}. 
\end{equation}
Here, $\densRatio\coloneqq\prob/\,\refProb\in\realPlus$ denotes the density ratio between a reference distribution $\refProbHead$ and any candidate probability distribution $\probHead\in\probability$. The reference distribution $\refProbHead\in\probability$ is a probability distribution centered within the balls. We assume that the center within the balls satisfies $\refProb>0$ for all $\ID\in\setID$. The positive constant $\const>0$ is employed to control the size of the balls. 
\begin{rem}[Motivation for Using Weighted L2 and Density-Ratio Balls]
The weighted L2 and DR balls in (\ref{eq:ambiguityL}) and (\ref{eq:ambiguity}) have properties that enhance the tractability of the DDRO problem. Specifically, they can be characterized by constraints using  differentiable and strictly convex functions. These properties are discussed in detail in Section \ref{sec:subResult1}.   
\end{rem}
}

\begin{figure*}[t]
   \centering
   \includegraphics[scale=0.38,clip]{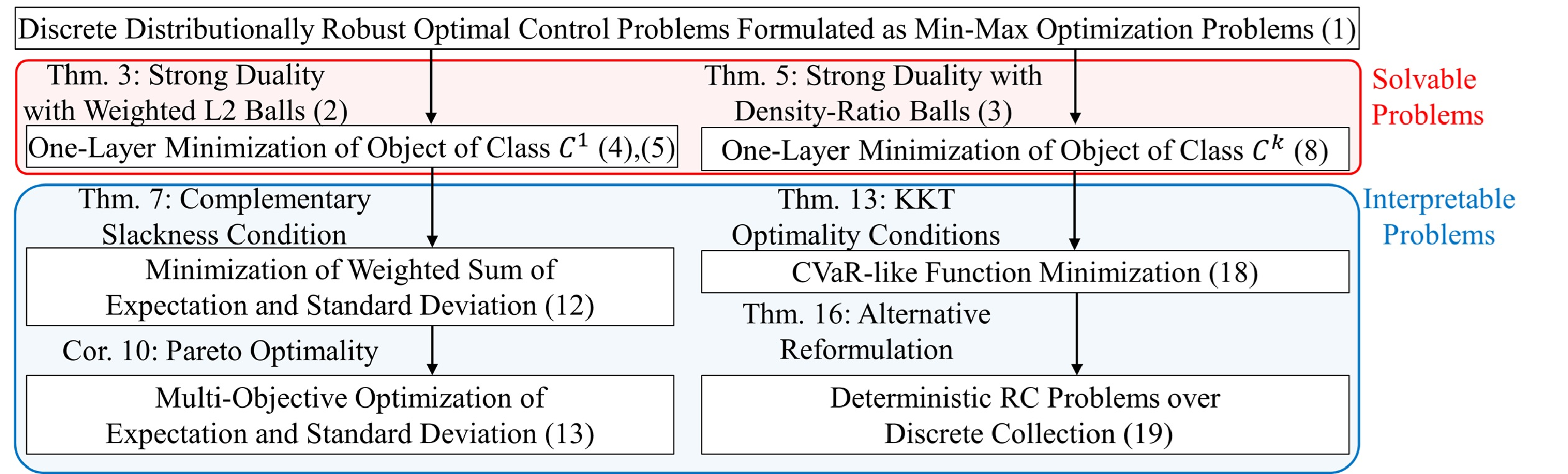}
   \caption{Overview of the proposed method.}
   \label{fig:overview}
\end{figure*}

\textcolor{black}{
An overview of the proposed method is shown in Fig. \ref{fig:overview}. 
Theorems \ref{thm:strongDualL} and \ref{thm:strongDual} in Section \ref{sec:mainResult2} demonstrate that the DDRO problem in (\ref{eq:DROC}) can be reformulated as a single-layer minimization problem with only trivial constraints. When solving the DDRO problem associated with each defined ball, it is essential to understand the effect of each ball size $\const$. Theorems \ref{thm:variance}, \ref{thm:CVaR}, \ref{thm:knapsack}, and Corollary \ref{cor:variance} in Section \ref{sec:mainResult1} provide theoretical insights into how this size can be determined. Detailed proofs of these theorems are presented in Sections \ref{sec:subResult1} and \ref{sec:subResult2}. 
}

\subsection{
Solvability Results of the DDRO Problems
}\label{sec:mainResult2}

\textcolor{black}{
We introduce the Lagrange multipliers associated with the weighted L2 ball, $\ineqVar\in\realPlus$ and $\eqVar\in\real$.  Using these, we consider the following  single-layer minimization problems:  
\begin{equation}\label{eq:dualDROCL}
\min_{\xVar\in\setXVar}\quad\inf_{\KKTVar\in\realPlus\times\real}\innerDualL, 
\end{equation}
\begin{equation}\label{eq:dualDROCRealL}
\min_{\xVar\in\setXVar}\quad\inf_{\KKTVar\in\realPlusPlus\times\real}\dualL. 
\end{equation}
Here, $\dualExtendedHeadL :\setXVar\times\realPlus\times\real\rightarrow\real\cup\{\infty\}$ and $\dualHeadL :\setXVar\times\realPlusPlus\times\real\rightarrow\real$ are denoted as the following functions:
\begin{multline}\label{eq:zeroWorstL}
\dualExtendedL \\ \coloneqq 
\begin{cases}
\eqVar, & (\ineqVar=0, ~ \max_{\ID\in\setID}\targetFcn\le\eqVar), \\
\dualL, & (\ineqVar\ne0),\\
\infty, & (\ineqVar=0, ~ \max_{\ID\in\setID}\targetFcn>\eqVar),
\end{cases}
\end{multline}
\begin{multline}\label{eq:worstL}
\dualL \coloneqq \ineqVar\,\expectation{\refProb}{\maxFcn{0,~\frac{\targetFcn+2\ineqVar-\eqVar}{2\ineqVar}}^2}\\+\ineqVar\,(\const^2-1) +\eqVar.
\end{multline}
\begin{thm}[Reformulation of DDRO Problems with Weighted L2 Balls]\label{thm:strongDualL}
Problems in (\ref{eq:dualDROCL}) and (\ref{eq:dualDROCRealL}) satisfy the following properties:
\begin{enumerate}
\renewcommand{\labelenumi}{(\roman{enumi})}
\item Minimizers of $\xVar$ to (\ref{eq:dualDROCL}) are equivalent to those to the DDRO problem in (\ref{eq:DROC}) associated with the weighted L2 ball $\ambiguity=\ambiguityL$ in (\ref{eq:ambiguityL}). 
\item Suppose that $\max_{\ID\in\setID}\targetFcnHead(\xVar^*,\,\ID)>\expectation{\refProb}{\targetFcnHead(\xVar^*,\,\ID)}+\const\sqrt{\variance{\refProb}{\targetFcnHead(\xVar^*,\,\ID)}}$ and $\min_{\ID\in\setID}\targetFcnHead(\xVar^*,\,\ID)>\expectation{\refProb}{\targetFcnHead(\xVar^*,\,\ID)}-\sqrt{\variance{\refProb}{\targetFcnHead(\xVar^*,\,\ID)}}/~\const$ are satisfied for a minimizer $\xVar^*$ to  (\ref{eq:DROC}). Subsequently, $\xVar^*$ is equivalent to a minimizer to the problem in (\ref{eq:dualDROCRealL}). 
\item If the cost function $\targetFcn$ is strictly convex and continuous on a bounded closed convex set $\setXVar$ for each $\ID\in\setID$, the minimizer of $\xVar$ corresponding to each of (\ref{eq:dualDROCL}) and  (\ref{eq:dualDROCRealL}) is unique.
\item If the cost function $\targetFcn$ is convex and of class $C^1$ on an open convex set $\setXVar$ for each $\ID\in\setID$, the objective functions of (\ref{eq:dualDROCL}) and (\ref{eq:dualDROCRealL}), $\innerDualL$ and $\dualL$, respectively, are also convex and of class $C^1$ on $\setXVar\times\realPlusPlus\times\real$.
\end{enumerate}
\end{thm}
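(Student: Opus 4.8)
The plan is to derive an explicit Lagrangian dual of the inner maximization in (\ref{eq:DROC}) for fixed $\xVar$ and then read off each of (i)--(iv). Substituting $\prob=\densRatio\,\refProb$ turns the inner problem into the finite-dimensional concave program of maximizing $\expectation{\refProb}{\densRatio\,\targetFcn}$ over $\densRatio\ge0$ subject to $\expectation{\refProb}{\densRatio}=1$ and $\expectation{\refProb}{(\densRatio-1)^2}\le\const^2$. I would attach multipliers $\ineqVar\ge0$ to the quadratic inequality and $\eqVar\in\real$ to the normalization while keeping $\densRatio\ge0$ explicit. Since everything decouples across $\ID$, the inner maximization over each $\densRatio\ge0$ is solved in closed form: for $\ineqVar>0$ the stationary value $1+(\targetFcn-\eqVar)/(2\ineqVar)$ projected onto $\realPlus$ gives the optimizer $\maxFcn{0,(\targetFcn+2\ineqVar-\eqVar)/(2\ineqVar)}$, and back-substitution reproduces exactly $\dualL$; for $\ineqVar=0$ the Lagrangian is affine in $\densRatio$, yielding $+\infty$ when some $\targetFcn>\eqVar$ and $\eqVar$ otherwise, which is precisely the boundary branch of $\dualExtendedL$. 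Thus $\innerDualL$ is the dual function of the inner program on all of $\realPlus\times\real$.

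For (i) I would invoke strong duality: the center $\densRatio\equiv1$ is strictly feasible because $\expectation{\refProb}{(\densRatio-1)^2}=0<\const^2$ and $\refProb>0$, so Slater's condition holds for the convex inner program and $\max_{\probHead\in\ambiguityL}\inner=\inf_{\KKTVar\in\realPlus\times\real}\innerDualL$ for every $\xVar$. Minimizing over $\xVar\in\setXVar$ then matches the minimizers of (\ref{eq:dualDROCL}) with those of (\ref{eq:DROC}) under $\ambiguity=\ambiguityL$. A useful auxiliary fact, obtained by letting $\ineqVar\to0^+$ with $\eqVar=\max_{\ID\in\setID}\targetFcn$, is that the boundary value $\dualExtendedL$ at $\ineqVar=0$ equals the limit of $\dualL$ from the interior; hence $\inf_{\KKTVar\in\realPlus\times\real}\innerDualL=\inf_{\KKTVar\in\realPlusPlus\times\real}\dualL$ as functions of $\xVar$, so (\ref{eq:dualDROCL}) and (\ref{eq:dualDROCRealL}) share the same outer objective.

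For (ii), given the previous identity the content is \emph{attainment} of the inner infimum inside the open set $\realPlusPlus\times\real$. Under the no-clipping regime the normalization forces $\eqVar^*=\expectation{\refProb}{\targetFcn}$ and the active quadratic constraint forces $\ineqVar^*=\sqrt{\variance{\refProb}{\targetFcn}}/(2\const)>0$, and the resulting inner value is $\expectation{\refProb}{\targetFcn}+\const\sqrt{\variance{\refProb}{\targetFcn}}$. I would show the two hypotheses are exactly the certificates for this interior critical point: the bound $\min_{\ID\in\setID}\targetFcnHead(\xVar^*,\,\ID)>\expectation{\refProb}{\targetFcnHead(\xVar^*,\,\ID)}-\sqrt{\variance{\refProb}{\targetFcnHead(\xVar^*,\,\ID)}}/\const$ guarantees $\densRatio^*>0$ for all $\ID$ (no clipping, so the critical point is valid), while $\max_{\ID\in\setID}\targetFcnHead(\xVar^*,\,\ID)>\expectation{\refProb}{\targetFcnHead(\xVar^*,\,\ID)}+\const\sqrt{\variance{\refProb}{\targetFcnHead(\xVar^*,\,\ID)}}$ makes the interior value strictly smaller than the boundary value $\max_{\ID\in\setID}\targetFcnHead(\xVar^*,\,\ID)$, so the infimum is attained at $\ineqVar^*>0$ rather than approached as $\ineqVar\to0^+$. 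Combining with (i) and the identity of outer objectives, $(\xVar^*,\ineqVar^*,\eqVar^*)$ is a genuine minimizer of (\ref{eq:dualDROCRealL}). This KKT bookkeeping---correctly pairing each geometric hypothesis with the activeness and clipping alternatives, and ruling out the degenerate boundary infimum---is the main obstacle.

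For (iv) I would rewrite the leading term as $\maxFcn{0,\targetFcn+2\ineqVar-\eqVar}^2/(4\ineqVar)$, exhibiting it as the quadratic-over-linear map $u^2/v$ evaluated at the convex nonnegative $u=\maxFcn{0,\targetFcn+2\ineqVar-\eqVar}$ (using convexity of $\targetFcn$ in $\xVar$) and the affine positive $v=4\ineqVar$; the composition rule then yields joint convexity on $\setXVar\times\realPlusPlus\times\real$, and since $t\mapsto\maxFcn{0,t}^2$ is $C^1$ with continuous derivative $2\maxFcn{0,t}$, the whole function is $C^1$ there, with $\innerDualL$ agreeing with $\dualL$ on $\realPlusPlus$. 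Finally, for (iii) I would note that the outer objective $\max_{\probHead\in\ambiguityL}\inner$ is a pointwise maximum, attained over the compact ball $\ambiguityL$, of functions that are strictly convex in $\xVar$ whenever each $\targetFcn$ is; a short argument shows such a maximum is strictly convex, and a strictly convex continuous function on a compact convex $\setXVar$ has a unique minimizer. Since (i) and the auxiliary identity make this common strictly convex objective the outer objective of both (\ref{eq:dualDROCL}) and (\ref{eq:dualDROCRealL}), the $\xVar$-minimizer of each is unique.
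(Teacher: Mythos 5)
Your proposal is correct and takes essentially the same route as the paper: the same explicit dualization of the inner problem under Slater's condition with the closed-form maximizer $\maxFcn{0,~(\targetFcn+2\ineqVar-\eqVar)/\,(2\ineqVar)}$, the same interior KKT pair $\ineqVar^*=\sqrt{\variance{\refProb}{\targetFcn}}\,/\,(2\const)$ and $\eqVar^*=\expectation{\refProb}{\targetFcn}$ with the same pairing of the min-condition to no-clipping and the max-condition to interior (rather than boundary) attainment for (ii), and the same strict-convexity-plus-extreme-value-theorem argument for (iii). The only departures are minor refinements: in (iv) you obtain joint convexity directly from the quadratic-over-linear form $\maxFcn{0,~\targetFcn+2\ineqVar-\eqVar}^2/\,(4\ineqVar)$ rather than via the paper's pointwise-supremum-of-convex-functions argument in Lemma \ref{lem:dual} (ii), and you make explicit the unconditional identity of the inner optimal values of (\ref{eq:dualDROCL}) and (\ref{eq:dualDROCRealL}), which the paper records only as the limit $\lim_{\ineqVar\rightarrow0^+}\dualL=\dualExtendedHeadL(\xVar,\,0,\,\eqVar)$ in a remark while routing Theorem \ref{thm:strongDualL} (ii) through Lemma \ref{lem:lagrangeDualL} (iv).
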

\begin{rem}[Solvability of DDRO Problems with Weighted L2 balls]
Theorem \ref{thm:strongDualL} (i) states that minimizers to the single-layer minimization problem in (\ref{eq:dualDROCL}) are equivalent to the solutions to the DDRO problem in (\ref{eq:DROC}). 
Furthermore, if minimizers to (\ref{eq:dualDROCL}) that satisfy the conditions in Theorem \ref{thm:strongDualL} (ii) exist, they are strictly equivalent to solutions to a single-layer minimization problem with trivial constraints in (\ref{eq:dualDROCRealL}). 
If the cost function $\targetFcn$ is convex and continuously differentiable on an open  convex set $\setXVar$ for each $\ID\in\setID$, Theorem \ref{thm:strongDualL} (iii) and (iv) state that the problem in (\ref{eq:dualDROCL}) becomes a smooth convex optimization problem. This can be solved using general gradient-based algorithms, such as the interior point method \cite{boyd2004convex}.
As a side note, the problem in (\ref{eq:dualDROCL}) includes only trivial constraints for all $\ineqVar>0$ but $\ineqVar=0$. However, when $\ineqVar=0$, it implicitly includes constraints $\max_{\ID\in\setID}\targetFcn\le\eqVar$. 
Meanwhile, $\dualL$ closely approximates $\dualExtendedL$, as $\lim_{\ineqVar\rightarrow0^+}\dualL=\dualExtendedHeadL(\xVar,\,0,\,\eqVar)$. From a practical perspective, this enables us to approximate the problem in (\ref{eq:dualDROCL}) as in (\ref{eq:dualDROCRealL}). 
Theorem \ref{thm:strongDualL} only states that $\innerDualL$ is of class $C^1$, even though $\targetFcn$ is of class $C^k$ for some $k\ge1$. This limitation arises because $\innerDualL$ includes quadratic terms involving the max function.
\end{rem}
}

\textcolor{black}{
Subsequently, we introduce the Lagrange multipliers associated with the DR ball, $\ineqVars\in\realPlus^\dimID$ and $\eqVar\in\real$. Using these, we consider the following optimization problem:
\begin{equation}\label{eq:dualDROC}
\min_{\xVar\in\setXVar}\quad\inf_{\KKTVars\in\realPlus^\dimID\times\real}\innerDual.
\end{equation}
Here, $\dualExtendedHead :\setXVar\times\setID\times\realPlus\times\real\rightarrow\real\cup\{\infty\}$ is denoted as the following functions:
\begin{multline}\label{eq:zeroWorst}
\dualExtended \\ \coloneqq
\begin{cases}
\eqVar, & ([\ineqVars]_\ID=0, ~ \targetFcn\le\eqVar), \\
\dual, & ([\ineqVars]_\ID\ne0),\\
\infty, & ([\ineqVars]_\ID=0, ~ \targetFcn>\eqVar), 
\end{cases}
\end{multline}
\begin{multline}\label{eq:worst}
\dual \\ \coloneqq (1+\const)\,[\ineqVars]_\ID\expFcn{\frac{\targetFcn-[\ineqVars]_\ID-\eqVar}{[\ineqVars]_\ID}}+\eqVar.
\end{multline}
\begin{thm}[Reformulation of DDRO Problems with Density-Ratio Balls]\label{thm:strongDual}
The problem in (\ref{eq:dualDROC}) satisfies the following properties:
\begin{enumerate}
\renewcommand{\labelenumi}{(\roman{enumi})}
\item Minimizers of $\xVar$ to (\ref{eq:dualDROC}) are equivalent to those to the DDRO problem in (\ref{eq:DROC}) associated with the DR ball $\ambiguity=\ambiguityDR$ in  (\ref{eq:ambiguity}).
\item If the cost function $\targetFcn$ is strictly convex and continuous on a bounded closed convex set $\setXVar$ for each $\ID\in\setID$, the minimizer of $\xVar$ to (\ref{eq:dualDROC}) is unique.
\item If the cost function $\targetFcn$ is convex and of class $C^k$ on an open convex set $\setXVar$ for each $\ID\in\setID$, the objective function of (\ref{eq:dualDROC}) is also convex and of class $C^k$ on $\setXVar\times\realPlusPlus^\dimID\times\real$.
\end{enumerate}
\end{thm}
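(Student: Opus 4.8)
The plan is to prove (i) by establishing, for every fixed $\xVar\in\setXVar$, the value identity
\[
\max_{\probHead\in\ambiguityDR}\inner \;=\; \inf_{\KKTVars\in\realPlus^\dimID\times\real}\innerDual ,
\]
after which (i) is immediate: the problems (\ref{eq:DROC}) and (\ref{eq:dualDROC}) then minimize one and the same function of $\xVar$, so their $\xVar$-minimizers coincide. First I would pass to density ratios. Since $\probHead\in\ambiguityDR$ is the same as requiring $\sum_{\ID\in\setID}\refProb\,\densRatio=1$, $\densRatio\ge0$, and $\densRatio\le1+\const$, the inner maximization is a linear program over a compact polytope. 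The center $\densRatio\equiv1$ is feasible (indeed strictly feasible for the bound constraints, as $\const>0$), so linear-programming strong duality applies. Dualizing the normalization with $\eqVar\in\real$ and the upper bounds with $[\ineqVars]_\ID\ge0$ and maximizing the resulting linear Lagrangian over $\densRatio\ge0$ produces the Lagrange dual value $\innerCVaR$, the Rockafellar--Uryasev form \cite{rockafellar2000optimization}; minimizing it over $\eqVar$ returns the inner maximum. This is the bridge that also underlies the CVaR reformulation of Theorem \ref{thm:CVaR}.

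The decisive step is to recognize the smooth summand in (\ref{eq:worst}) as an exact, differentiable surrogate for the nonsmooth positive part appearing above. The key is the elementary perspective identity
\[
\maxFcn{0,~a}\;=\;\inf_{\ineqVar>0}\;\ineqVar\,\expFcn{\frac{a-\ineqVar}{\ineqVar}},\qquad a\in\real ,
\]
which I would check directly: for $a>0$ the minimand is stationary at $\ineqVar=a$ with value $a$, and for $a\le0$ it decreases monotonically to $0$ as $\ineqVar\to0^{+}$. Applying this with $a=\targetFcn-\eqVar$, scaling by $(1+\const)$, and noting that each summand $\refProb\,\dualExtended$ depends on the multiplier vector only through its own entry $[\ineqVars]_\ID$ (with $\refProb>0$), the infimum over $\ineqVars$ distributes across the finite sum and yields $\inf_{\ineqVars}\innerDual=\innerCVaR$. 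Together with the preceding paragraph this closes the value identity, and hence (i). The extended definition (\ref{eq:zeroWorst}) is precisely what makes this infimum attained at the boundary: at $[\ineqVars]_\ID=0$ it records $\lim_{\ineqVar\to0^{+}}\dual$, equal to $\eqVar$ when $\targetFcn\le\eqVar$ and $+\infty$ otherwise.

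For (ii) I would show the shared objective $V(\xVar)\coloneqq\max_{\probHead\in\ambiguityDR}\inner$ is strictly convex: at an intermediate point a maximizing $\probHead^{*}$ gives $\sum_{\ID}[\probHead^{*}]_\ID\targetFcn$, a convex combination of strictly convex functions with weights summing to one, hence strictly convex, so the usual pointwise-maximum estimate makes $V$ strictly convex; continuity of $\targetFcn$ together with compactness of $\ambiguityDR$ and of $\setXVar$ gives existence, and strict convexity gives uniqueness. For (iii) I would write the summand, up to the affine term $\eqVar$, as $(1+\const)e^{-1}$ times the perspective $[\ineqVars]_\ID\,\expFcn{u/[\ineqVars]_\ID}$ of the exponential evaluated at $u=\targetFcn-\eqVar$; this perspective is jointly convex in $(u,[\ineqVars]_\ID)$ for $[\ineqVars]_\ID>0$ and nondecreasing in $u$, while $u$ is convex in $(\xVar,\eqVar)$, so the composition rule delivers joint convexity on $\setXVar\times\realPlusPlus^\dimID\times\real$, and $C^{k}$ regularity follows since the exponential is $C^{\infty}$, $\targetFcn$ is $C^{k}$, and the exponent is smooth whenever $[\ineqVars]_\ID>0$. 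I expect the main obstacle to be the rigorous boundary analysis at $[\ineqVars]_\ID=0$, that is, verifying in what precise sense the infimum is attained there and that $\dualExtendedHead$ is the correct extension; this is where I anticipate the barrier $\barrierInner$ is introduced, so that the boundary is reached through the strict interior on which every quantity is smooth.
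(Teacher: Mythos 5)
Your proof is correct, but it reaches the value identity (\ref{eq:inner}) by a genuinely different route than the paper. You first dualize the inner maximization as a finite linear program over the compact polytope $\{\densRatioHead\ge0,\ \densRatioHead_\ID\le1+\const,\ \expectation{\refProb}{\densRatio}=1\}$, obtaining the Rockafellar--Uryasev form $\innerCVaR$ directly, and then recover the smooth exponential dual (\ref{eq:worst}) from the elementary envelope identity $\maxFcn{0,~a}=\inf_{\ineqVar>0}\ineqVar\,\expFcn{(a-\ineqVar)/\ineqVar}$, distributing the componentwise infimum across the finite sum (valid since each summand involves only its own $[\ineqVars]_\ID$ and $\refProb>0$). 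The paper proceeds in the opposite order: it first replaces the linear bound $\densRatio\le1+\const$ by the equivalent strictly convex constraint $\densRatio\lnFcn{\densRatio/(1+\const)}\le0$ (Lemma \ref{lem:ambiguity}, following \cite{hu2013kullback}), invokes Slater's condition through Lemma \ref{lem:dual}, and computes the dual function by stationarity of the concave Lagrangian, which produces the exponential form (\ref{eq:worst}) immediately together with the explicit worst-case ratio (\ref{eq:worstDens}); the CVaR expression only appears afterwards, in Lemma \ref{lem:lagrangeDual}(iii), via complementary slackness. Your route is more elementary (finite LP duality plus one-dimensional calculus, no entropy-type reformulation or Slater argument) and it explains transparently why the exponential arises: it is a smooth variational envelope of the hinge. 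The paper's route buys reusable machinery: Lemma \ref{lem:dual} also serves the weighted L2 ball, and the explicit maximizer $\densRatio^*$ feeds the proofs of Theorems \ref{thm:CVaR} and \ref{thm:knapsack}. Your treatments of (ii) and (iii) are sound and essentially match the paper in substance, except that for (iii) you verify convexity directly via the perspective of the exponential and the monotone composition rule, where the paper deduces it from the pointwise-supremum representation of the dual function; both are valid, and your boundary analysis at $[\ineqVars]_\ID=0$ is already complete as written --- note that, contrary to your closing expectation, the paper introduces no barrier term in the proof (the logarithmic barrier $-0.1\ln(\ineqVar)$ appears only as a numerical device in the experiments), and none is needed, since the extended function (\ref{eq:zeroWorst}) handles the boundary exactly as you describe.
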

\begin{rem}[Solvability of DDRO Problems with DR balls]
Theorem \ref{thm:strongDual} (i) states that minimizers to a single-layer minimization problem in (\ref{eq:dualDROC}) is equivalent to the solutions to the DDRO problem in (\ref{eq:DROC}). 
If the cost function $\targetFcn$ is convex and continuously differentiable on an open  convex set $\setXVar$ for each $\ID\in\setID$, Theorem \ref{thm:strongDual} (ii) and (iii) states that the problem in (\ref{eq:dualDROC}) also become a smooth convex optimization problem. This can be solved using general gradient-based algorithms, such as the interior point method \cite{boyd2004convex}. 
As a side note, the problem in (\ref{eq:dualDROC}) includes only trivial constraints when $[\ineqVars]_\ID>0$ for all $\ID\in\setID$. However, when there exists $\ID\in\setID$ which satisfies $[\ineqVars]_\ID=0$, it implicitly includes the constraint $\targetFcn\le\eqVar$. 
Meanwhile, $\dual$ closely approximates $\dualExtended$, as $\lim_{[\ineqVars]_\ID\rightarrow0^+}\dual=\dualExtendedHead(\xVar,\,\ID,\,0,\,\eqVar)$. From a practical perspective, this enables us to approximate the problem in (\ref{eq:dualDROC}) as follows:
\begin{equation}
\min_{\xVar\in\setXVar}\quad\inf_{\KKTVars\in\realPlusPlus^\dimID\times\real}\expectation{\refProb}{\dual}.
\end{equation}
\end{rem}
}

\subsection{
Interpretability Results of the Size of Balls
}\label{sec:mainResult1}
\textcolor{black}{
To reformulate the DDRO problem into an interpretable formulation, a standard deviation-based problem corresponding to the DDRO problem associated with a weighted L2 ball is considered. We introduce the following problem that minimizes the weighted sum of the expectation and standard deviation:
\begin{equation}\label{eq:variance}
\min_{\xVar\in\setXVar} \quad \expectation{\refProb}{\targetFcn}+\const\sqrt{\variance{\refProb}{\targetFcn}}.
\end{equation}
\begin{thm}[Expectation and Standard Deviation Minimization]\label{thm:variance}
Suppose that $\max_{\ID\in\setID}\targetFcnHead(\xVar^*,\,\ID)>\expectation{\refProb}{\targetFcnHead(\xVar^*,\,\ID)}+\const\sqrt{\variance{\refProb}{\targetFcnHead(\xVar^*,\,\ID)}}$ and $\min_{\ID\in\setID}\targetFcnHead(\xVar^*,\,\ID)>\expectation{\refProb}{\targetFcnHead(\xVar^*,\,\ID)}-\sqrt{\variance{\refProb}{\targetFcnHead(\xVar^*,\,\ID)}}/~\const$ are satisfied for a minimizer $\xVar^*$ to the DDRO problem in (\ref{eq:DROC}) associated with the weighted L2 ball $\ambiguity=\ambiguityL$ in (\ref{eq:ambiguityL}). This minimizer is equivalent to those to (\ref{eq:variance}). 
\end{thm}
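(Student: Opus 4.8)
The plan is to evaluate the inner maximization in (\ref{eq:DROC}) under the weighted L2 ball in closed form and compare it with the objective of (\ref{eq:variance}). Reparametrizing through the density ratio $\densRatio=\prob/\refProb$, the constraint $\probHead\in\ambiguityL$ reads $\expectation{\refProb}{\densRatio}=1$, $\densRatio\ge 0$, and $\expectation{\refProb}{(\densRatio-1)^2}\le\const^2$, while the objective becomes $\expectation{\prob}{\targetFcn}=\expectation{\refProb}{\densRatio\,\targetFcn}$. I denote the inner maximum by $V(\xVar)$ and the objective of (\ref{eq:variance}) by $W(\xVar)=\expectation{\refProb}{\targetFcn}+\const\sqrt{\variance{\refProb}{\targetFcn}}$, and aim to show $V(\xVar^*)=W(\xVar^*)$ and then transfer the minimizer property.

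First I would derive the upper bound $V(\xVar)\le W(\xVar)$ by relaxing the problem, i.e. dropping the nonnegativity constraint $\densRatio\ge 0$, which only enlarges the feasible set. Substituting $\densRatio=1+u_\ID$ with $\expectation{\refProb}{u_\ID}=0$ and $\expectation{\refProb}{u_\ID^2}\le\const^2$, the objective is $\expectation{\refProb}{\targetFcn}+\expectation{\refProb}{u_\ID(\targetFcn-\expectation{\refProb}{\targetFcn})}$, and the Cauchy--Schwarz inequality in the inner product $\langle a,b\rangle=\expectation{\refProb}{a_\ID b_\ID}$ gives the relaxed optimum exactly $W(\xVar)$, attained at $u^*_\ID=\const(\targetFcn-\expectation{\refProb}{\targetFcn})/\sqrt{\variance{\refProb}{\targetFcn}}$ whenever $\variance{\refProb}{\targetFcn}>0$ (the degenerate case $\variance{\refProb}{\targetFcn}=0$ forces $V=W=\expectation{\refProb}{\targetFcn}$). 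Hence $V(\xVar)\le W(\xVar)$ for every $\xVar$.

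The crux is the matching lower bound $V(\xVar^*)\ge W(\xVar^*)$, which is where the hypotheses enter. The candidate $\densRatio^*=1+u^*_\ID=1+\const(\targetFcn-\expectation{\refProb}{\targetFcn})/\sqrt{\variance{\refProb}{\targetFcn}}$ already satisfies $\expectation{\refProb}{\densRatio^*}=1$ and uses the full L2 budget; it is a \emph{feasible} point of $\ambiguityL$ iff $\densRatio^*\ge 0$ for all $\ID$, whose binding instance is the smallest cost, i.e. $\min_\ID\targetFcn\ge\expectation{\refProb}{\targetFcn}-\sqrt{\variance{\refProb}{\targetFcn}}/\const$ --- precisely the second hypothesis. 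Under it, $\densRatio^*$ is admissible and achieves objective $W(\xVar^*)$, so $V(\xVar^*)\ge W(\xVar^*)$, giving $V(\xVar^*)=W(\xVar^*)$. (The first hypothesis forces $\variance{\refProb}{\targetFcn}>0$, excluding the degenerate case and aligning the assumptions with those of Theorem \ref{thm:strongDualL}(ii); alternatively one may invoke that result and minimize $\dualL$ over $(\ineqVar,\eqVar)\in\realPlusPlus\times\real$ analytically, where with the max inactive $\dualL$ reduces to $\expectation{\refProb}{(\targetFcn-\eqVar)^2}/(4\ineqVar)+\ineqVar\const^2+\expectation{\refProb}{\targetFcn}$, minimized at $\eqVar^*=\expectation{\refProb}{\targetFcn}$, $\ineqVar^*=\sqrt{\variance{\refProb}{\targetFcn}}/(2\const)$ to yield $W$, the two hypotheses keeping the max argument nonnegative.)

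Finally I would chain the global inequality with equality at $\xVar^*$. Since $\xVar^*$ minimizes $V$, for every $\xVar$ one has $W(\xVar)\ge V(\xVar)\ge V(\xVar^*)=W(\xVar^*)$, so $\xVar^*$ minimizes $W$ and $\min W=\min V$; conversely any minimizer $\xVar'$ of $W$ satisfies $V(\xVar')\le W(\xVar')=\min W=\min V\le V(\xVar')$, hence minimizes $V$, which establishes the equivalence (under the strict convexity of Theorem \ref{thm:strongDualL}(iii) the minimizers are unique, so the two minimizer sets coincide literally). The main obstacle is the lower-bound step: one must recognize that the Cauchy--Schwarz optimizer, rather than being a mere bound, is an \emph{attainable feasible} density ratio, and that its nonnegativity --- equivalently the min-type hypothesis --- is exactly what converts the relaxation bound into an equality; care is also needed to keep the inequality chain valid at the degenerate zero-variance points where $V=W$ holds trivially.
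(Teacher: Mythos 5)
Your proof is correct, but it takes a genuinely different route from the paper's. The paper works entirely on the dual side: it first establishes strong duality for the inner maximization (Lemma \ref{lem:lagrangeDualL} (i)), then solves the KKT system of the dual in closed form under the min-type hypothesis to obtain $\inf_{\KKTVar}\innerDualL=\min\{\max_{\ID\in\setID}\targetFcn,~\expectation{\refProb}{\targetFcn}+\const\sqrt{\variance{\refProb}{\targetFcn}}\}$ (Lemma \ref{lem:lagrangeDualL} (iii), with the explicit dual minimizer $\eqVar^*=\expectation{\refProb}{\targetFcn}$, $\ineqVar^*=\sqrt{\variance{\refProb}{\targetFcn}}/(2\const)$ — exactly the pair you sketch in your parenthetical), and finally uses the max-type hypothesis at $\xVar^*$ to select the second branch of the $\min$. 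You instead argue primally: Cauchy--Schwarz on the relaxed problem (nonnegativity dropped) gives the global bound $V(\xVar)\le W(\xVar)$, and the min-type hypothesis is recognized as precisely the condition making the Cauchy--Schwarz maximizer $\densRatio^*=1+\const(\targetFcn-\expectation{\refProb}{\targetFcn})/\sqrt{\variance{\refProb}{\targetFcn}}$ a \emph{feasible} density ratio, so the bound is attained at $\xVar^*$. Your approach buys two things the paper leaves implicit: (a) the global inequality $V\le W$ and the explicit two-sided sandwich argument transferring minimizers in both directions, which the paper's one-line proof of Theorem \ref{thm:variance} does not spell out (Lemma \ref{lem:lagrangeDualL} (iii) only applies at points satisfying its hypothesis, so the chain $W(\xVar)\ge V(\xVar)\ge V(\xVar^*)=W(\xVar^*)$ needs exactly the relaxation bound you supply); and (b) the observation that the max-type hypothesis is not actually needed for attainment — feasibility of $\densRatio^*$ already forces $\expectation{\refProb}{\targetFcn}+\const\sqrt{\variance{\refProb}{\targetFcn}}\le\max_{\ID\in\setID}\targetFcn$, so the $\min$ in the paper's formula automatically selects the second branch under the min-type hypothesis alone, the max-type hypothesis serving chiefly to exclude degeneracy and to match Lemma \ref{lem:lagrangeDualL} (iv), where strictness is what places the dual minimizer in $\realPlusPlus\times\real$. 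What the paper's dual route buys in exchange is that same dual minimizer, which is reused for Theorem \ref{thm:strongDualL} (ii); your primal argument, taken alone, would not yield that by-product, though you correctly flag the dual computation as an alternative path.
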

\begin{rem}[Weight Parameters and Size of Weighted L2 Balls]
Theorem \ref{thm:variance} implies that the size of the weighted L2 ball denoted in (\ref{eq:ambiguityL}), $\const$, corresponds to the weight parameter of the problem in (\ref{eq:variance}) provided $\const$ is small. This selects the trade-off between the average performance and its variability.
\end{rem}
}

\textcolor{black}{
Minimizers to the problem in (\ref{eq:variance}) provide Pareto-optimal solutions for multi-objective optimization \cite[Section 2.1]{pardalos2017non}. Therefore, this enables us to understand the trade-off chosen by $\const$. Consider a multi-objective optimization problem in the following form:
\begin{equation}\label{eq:multi}
\min_{\xVar\in\setXVar} \quad \Bigg\{\expectation{\refProb}{\targetFcn},~\sqrt{\variance{\refProb}{\targetFcn}}\Bigg\}.
\end{equation}
\begin{rem}[Pareto Front and Optimality]
The selection of the weight parameter $\const$ provides an element of the set of Pareto-optimal solutions called the Pareto front \cite[Section 2.1]{pardalos2017non}. 
The optimality of these Pareto-optimal solutions to (\ref{eq:multi}) is defined as follows \cite[Definition 1.3]{pardalos2017non}: A point $\xVar^*\in\setXVar$ is a Pareto-optimal solution if there is no $\xVar\in\setXVar$ that satisfies either of the following equations:
\begin{multline}\label{eq:largeExpect}
\expectation{\refProb}{\targetFcn}<
\expectation{\refProb}{\targetFcnHead(\xVar^*,\,\ID)},\quad \\  
\sqrt{\variance{\refProb}{\targetFcn}}\le  
\sqrt{\variance{\refProb}{\targetFcnHead(\xVar^*,\,\ID)}},
\end{multline}
\begin{multline}\label{eq:largeVariance}
\expectation{\refProb}{\targetFcn}\le
\expectation{\refProb}{\targetFcnHead(\xVar^*,\,\ID)},\quad \\  
\sqrt{\variance{\refProb}{\targetFcn}}<  
\sqrt{\variance{\refProb}{\targetFcnHead(\xVar^*,\,\ID)}}.
\end{multline} 
\end{rem}
\begin{cor}[Pareto-optimal solutions to Expectation and Standard Deviation Minimization]\label{cor:variance}
Minimizers to (\ref{eq:variance}) are some  Pareto-optimal solutions to (\ref{eq:multi}). 
\end{cor}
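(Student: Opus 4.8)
The plan is to invoke the classical weighted-sum scalarization argument of multi-objective optimization, observing that the objective of (\ref{eq:variance}) is exactly a positive-weighted sum of the two objectives appearing in (\ref{eq:multi}), with weight $1$ on $\expectation{\refProb}{\targetFcn}$ and weight $\const>0$ on $\sqrt{\variance{\refProb}{\targetFcn}}$. First I would fix an arbitrary minimizer $\xVar^*$ of (\ref{eq:variance}) and argue by contradiction: assume $\xVar^*$ is \emph{not} a Pareto-optimal solution of (\ref{eq:multi}). By the optimality definition recalled in the preceding remark, this supplies some $\xVar\in\setXVar$ satisfying at least one of the two componentwise dominance conditions (\ref{eq:largeExpect}) or (\ref{eq:largeVariance}).

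The core of the argument is then a short two-case computation on the scalar objective $\expectation{\refProb}{\targetFcn}+\const\sqrt{\variance{\refProb}{\targetFcn}}$. In the case (\ref{eq:largeExpect}) I combine the strict inequality on the expectation with the weak inequality on the standard deviation, scaled by $\const>0$, to conclude that the scalarized objective is strictly smaller at $\xVar$ than at $\xVar^*$. In the symmetric case (\ref{eq:largeVariance}) the weak inequality on the expectation combines with the strictly scaled standard-deviation inequality to give the same strict decrease. In either case this contradicts the assumed minimality of $\xVar^*$ for (\ref{eq:variance}), which completes the proof.

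The point I would flag explicitly is that strict positivity of \emph{both} weights ($1$ and $\const$) is precisely what converts the mixed strict/weak pair of componentwise inequalities in the Pareto dominance condition into a single strict inequality for the scalar sum; this is the only place where the hypothesis $\const>0$ is used. No convexity or regularity assumption on $\targetFcn$ is needed, and because $\setID$ is finite both objectives are automatically real-valued and finite (consistent with the standing assumption $\expectation{\realProb}{\targetFcn}<\infty$), so there is essentially no technical obstacle beyond correctly tracking the inequality directions in the two cases. I would close by noting that the statement only asserts that minimizers of (\ref{eq:variance}) are \emph{some} Pareto-optimal solutions, so no converse or completeness claim about the Pareto front must be established.
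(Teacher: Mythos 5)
Your proposal is correct and coincides with the paper's approach: the paper offers no explicit proof of Corollary \ref{cor:variance}, instead invoking the classical weighted-sum scalarization result via the citation \cite[Section 2.1]{pardalos2017non}, and your contradiction argument---turning the mixed strict/weak dominance inequalities in (\ref{eq:largeExpect}) and (\ref{eq:largeVariance}) into a strict decrease of the scalarized objective using the positive weights $1$ and $\const>0$---is exactly that standard argument written out. Your side remarks (no convexity or regularity of $\targetFcn$ is needed, and finiteness of both objectives is automatic since $\setID$ is finite) are also accurate.
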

\begin{rem}[Difficulty in Minimizing Standard Deviation]
Although the formulations of the problems in (\ref{eq:variance}) and (\ref{eq:multi}) are clear, they may be difficult to solve directly. This is because the standard deviation is not necessarily a convex function of $\xVar$. Therefore, solving the equivalent DDRO problem associated with the weighted L2 ball in  (\ref{eq:dualDROCL}) is often more effective than directly solving (\ref{eq:variance}) or (\ref{eq:multi}). 
\end{rem}
}

\textcolor{black}{
To reformulate the DDRO problem into another interpretable formulation, we consider a CVaR-based problem corresponding to the DDRO problem associated with the DR ball. Let a CVaR-like function be associated with a probability level $\beta\in[0,\,1]$. We denote this function as $\beta\text{-}\hat{\text{CVaR}}$:  
\begin{multline}\label{eq:hatCVaR}
\targetHatCVaR \\ \coloneqq \expectation{\refProb}{\targetFcn\mid\targetFcn>\targetVaR}.
\end{multline}
The function $\beta\text{-}\hat{\text{CVaR}}$ becomes equivalent to $\beta\text{-}\text{CVaR}$ when the strict inequality in (\ref{eq:hatCVaR}) is replaced with a non-strict inequality.
Consider a $\beta\text{-}\tilde{\text{CVaR}}$ function that satisfies the following equation as
\begin{multline}\label{eq:tildeCVaR}
\targetCVaR\le\targetTildeCVaR \\ \le\targetHatCVaR.
\end{multline}
Futhermore, we introduce the $\beta\text{-}\tilde{\text{CVaR}}$ minimization problem as follows:
\begin{equation}\label{eq:CVaR}
\min_{\xVar\in\setXVar} \quad \targetTildeCVaR.
\end{equation}
Here, we consider the $\beta\text{-}\tilde{\text{CVaR}}$ minimization problem with some $\beta\text{-}\tilde{\text{CVaR}}$ function that satisfies (\ref{eq:tildeCVaR}).
\begin{rem}[$\beta\text{-}\hat{\text{CVaR}}$ and $\beta\text{-}\tilde{\text{CVaR}}$]
The definitions of 
$\beta\text{-}\hat{\text{CVaR}}$ and $\beta\text{-}\tilde{\text{CVaR}}$ differ from $\beta$-CVaR in previous studies \cite[Theorem 1] {rockafellar2000optimization},\cite[Proposition 5.11]{kuhn2024distributionallyrobustoptimization} because $\refProb$ is a discrete distribution; namely, some $\alpha\in\real$ satisfies $\sum_{\ID\in\{\ID\in\setID\mid\targetFcn=\alpha\}}\refProb\ne0$. 
\end{rem}
\begin{thm}[Conditional Value at Risk Minimization]\label{thm:CVaR}
Provided that the probability level $\beta\coloneqq\const/\,(1+\const)$, the problem in (\ref{eq:CVaR}) satisfies the following properties: 
\begin{enumerate}
\renewcommand{\labelenumi}{(\roman{enumi})}
\item Minimizers of $\xVar$ to some $\beta\text{-}\tilde{\text{CVaR}}$ function in (\ref{eq:CVaR}) are equivalent to those to the DDRO problem in (\ref{eq:DROC}) associated with the DR ball $\ambiguity=\ambiguityDR$ in  (\ref{eq:ambiguity}).
\item 
If there exists $\alpha\in\real$ such that $\distribution{\refProb}{\targetFcn\ge\alpha}=\beta$, the $\beta\text{-}\tilde{\text{CVaR}}$ in (\ref{eq:CVaR}) is uniquely determined as $\targetTildeCVaR=\targetHatCVaR$. 
\end{enumerate}
\end{thm}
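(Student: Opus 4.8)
The plan is to prove (i) by showing that, for every fixed $\xVar$, the inner maximization $\max_{\probHead\in\ambiguityDR}\inner$ equals a single value $V(\xVar)$ that is sandwiched between $\targetCVaR$ and $\targetHatCVaR$; this $V$ then serves as the particular $\beta\text{-}\tilde{\text{CVaR}}$ function in (\ref{eq:CVaR}), and since the two objectives coincide pointwise in $\xVar$, their minimizers agree. The first step is to substitute the density ratio $\densRatio=\prob/\refProb$ and rewrite the inner problem as the finite linear program
\[
\max_{\densRatioHead}\ \Big\{\expectation{\refProb}{\densRatio\,\targetFcn}\ :\ \expectation{\refProb}{\densRatio}=1,\ 0\le\densRatio\le1+\const\ \forall\ID\in\setID\Big\},
\]
where $\refProb>0$ lets us pass freely between $\probHead$ and $\densRatioHead$. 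The feasible set is a compact polytope and the objective is linear, so the maximum is attained and there is no duality gap. The key arithmetic identity to carry along is $1+\const=1/(1-\beta)$, which is exactly $\beta=\const/(1+\const)$ rearranged.

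Next I would solve this LP by a rearrangement / water-filling argument: any optimizer assigns the maximal weight $1+\const$ to the indices with the largest values of $\targetFcn$ and zero to the smallest, since moving weight from a lower to a higher value of $\targetFcn$ never decreases the objective while preserving the budget $\expectation{\refProb}{\densRatio}=1$. Writing $\tau\coloneqq\targetVaR$ and invoking the defining VaR inequalities $\distribution{\refProb}{\targetFcn>\tau}\le1-\beta\le\distribution{\refProb}{\targetFcn\ge\tau}$, the optimizer sets $\densRatio=1+\const$ on $\{\targetFcn>\tau\}$, $\densRatio=0$ on $\{\targetFcn<\tau\}$, and places the residual budget $1-(1+\const)\distribution{\refProb}{\targetFcn>\tau}$ on the atom $\{\targetFcn=\tau\}$ (feasibility of this residual is exactly the second VaR inequality). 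This gives the closed form $V=w\,\targetHatCVaR+(1-w)\tau$ with $w\coloneqq(1+\const)\distribution{\refProb}{\targetFcn>\tau}\in[0,1]$.

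From this representation the sandwich $\targetCVaR\le V\le\targetHatCVaR$ in (\ref{eq:tildeCVaR}) follows. The upper bound is immediate because $V$ is a convex combination of $\targetHatCVaR$ and $\tau\le\targetHatCVaR$. For the lower bound I would form $V-\targetCVaR$, clear the positive denominators, and factor the resulting numerator as $\big(\distribution{\refProb}{\targetFcn\ge\tau}-(1-\beta)\big)\,\expectation{\refProb}{(\targetFcn-\tau)\,\mathbf{1}\{\targetFcn>\tau\}}$, both factors being nonnegative by the VaR inequalities. Hence $V$ is a legitimate $\beta\text{-}\tilde{\text{CVaR}}$; choosing this function in (\ref{eq:CVaR}) makes its objective equal to the DDRO inner value for every $\xVar$, so the minimizers of (\ref{eq:CVaR}) and of (\ref{eq:DROC}) with $\ambiguity=\ambiguityDR$ coincide, proving (i).

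For (ii), the hypothesis eliminates the fractional weight on the VaR atom: it supplies a threshold at which the cumulative upper-tail mass matches the budget $1-\beta$ exactly, forcing $w=1$ (equivalently $\distribution{\refProb}{\targetFcn>\tau}=1-\beta$). Then $V$ collapses to $\targetHatCVaR$ with no dependence on how the atom is split, and because the LP value is unique, $\targetTildeCVaR$ is pinned to $\targetHatCVaR$. The main obstacle throughout is the correct bookkeeping of the probability atom at the VaR level: justifying precisely how much residual weight the optimizer must place there and establishing the lower sandwich inequality are exactly where the discreteness of $\refProbHead$ intervenes, and they are what prevent the classical continuous-case collapse $\targetCVaR=\targetHatCVaR$ from holding in general.
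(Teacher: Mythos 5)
Your proof is correct, but it reaches the key identity by a genuinely different route than the paper. The paper never solves the inner maximization primally: it reformulates the DR ball as the strictly convex set $\hatAmbiguity$ in (\ref{eq:hatAmbiguity}) (Lemma \ref{lem:ambiguity}), invokes Slater/strong duality through Lemma \ref{lem:dual}, derives the dual function $\dualExtended$ explicitly, and then shows via complementary slackness and a one-sided-derivative analysis of the nonsmooth Rockafellar--Uryasev function $\innerCVaRFcn$ that $\targetVaR$ attains $\inf_{\eqVar}\innerCVaRFcn$ (Lemma \ref{lem:lagrangeDual} (i), (iii)); Theorem \ref{thm:CVaR} then sandwiches $\innerCVaRFcnHead(\xVar,\,\targetVaR)$ between CVaR and $\hat{\text{CVaR}}$ exactly as you do. You instead treat the inner problem as a finite LP over density ratios $0\le\densRatio\le1+\const$, $\expectation{\refProb}{\densRatio}=1$, and solve it by a greedy/exchange argument, obtaining the worst-case distribution in closed form and the value $V=w\,\targetHatCVaR+(1-w)\,\targetVaR$ with $w=(1+\const)\,\distribution{\refProb}{\targetFcn>\targetVaR}$; one checks this agrees with the paper's $\innerCVaRFcnHead(\xVar,\,\targetVaR)$, your feasibility bookkeeping for the residual atom weight is precisely the pair of VaR inequalities the paper uses, and your factorization of $V-\targetCVaR$ reproduces the paper's inequality chain (\ref{eq:CVaRInequality}) in multiplied-out form. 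Your approach is more elementary and self-contained (no Slater condition, no log-reformulation, no subdifferential computation), exhibits the extremal distribution explicitly, and makes statement (ii) immediate since the hypothesis forces $w=1$ in the convex combination; what it does not deliver is the dual function $\dualExtended$ itself, which the paper's duality machinery reuses for Theorems \ref{thm:strongDual} and \ref{thm:knapsack}, so the paper's longer route buys shared infrastructure. Two small caveats, neither of which distinguishes you from the paper: when $\distribution{\refProb}{\targetFcn>\targetVaR}=0$ the quantity $\targetHatCVaR$ conditions on a null event, an edge case both arguments gloss over; and in (ii) you silently read the hypothesis as saying the upper-tail mass equals $1-\beta$ (equivalently $\distribution{\refProb}{\targetFcn\le\targetVaR}=\beta$), whereas the theorem literally writes $\distribution{\refProb}{\targetFcn\ge\alpha}=\beta$ --- but the paper's own proof makes exactly the same substitution, so your reading matches the proof-level statement rather than the literal one.
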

\begin{rem}[Probability Level and Size of Density-Ratio Balls]
Theorem \ref{thm:CVaR} indicates that the probability level $\beta=\const/(1+\const)$ corresponds to the size of the DR ball defined in (\ref{eq:ambiguity}). The probability level $\beta$ monotonically increases with respect to $\const$. Theorem \ref{thm:CVaR} (ii) states that the specific formulation of $\beta\text{-}\tilde{\text{CVaR}}$ can be described as $\beta\text{-}\hat{\text{CVaR}}$.
\end{rem} 
}

\textcolor{black}{
\begin{rem}[Comparison of Balls]
The size of the weighted L2 ball reflects the trade-off parameter $\const$, which balances the average performance and its variability. In contrast, the size of the DR ball is determined by the probability level $\beta$, which corresponds to the threshold for evaluating  the maximum cost with probability greater than $\beta$. If the focus is on overall performance, the weighted L2 ball is appropriate. If the focus is on the worst-case cost, a DR ball should be used.
\end{rem}
}

\textcolor{black}{
We present an alternative interpretation of the DDRO problem in (\ref{eq:DROC}) associated with the DR ball in (\ref{eq:ambiguity}) as a deterministic RC problem over a discrete collection using the following theorem. 
\begin{thm}[Deterministic RC Problems with Worst $\constC$ Costs]\label{thm:knapsack}
Assume that the reference distribution is uniform, as $\refProb=1/\,\dimID$ and $\constC\coloneqq\dimID/\,(1+\const)$ is a positive integer. Subsequently, minimizers of $\xVar$ to (\ref{eq:DROC}) associated with (\ref{eq:ambiguity}) are equivalent to those to the following deterministic RC problem:
\begin{equation}\label{eq:knapsackRC2}
\min_{\xVar\in\setXVar}\quad
\max_{(\ID_1,\,\cdots,\,\ID_{\constC})\in\ambiguityC}\quad 
\sum_{l=1}^{\constC}\targetFcnHead(\xVar,\,\ID_l),
\end{equation}
\begin{multline}\label{eq:worstC}
\ambiguityC \coloneqq \{(\ID_1,\,\cdots,\,\ID_\constC)\in\setID^\constC \mid\forall j\in\{1,\cdots,\constC\},~ \\ 
\forall k\in\{1,\cdots,\constC\}\setminus \{j\},~\ID_j\ne\ID_k\}.
\end{multline}  
\end{thm}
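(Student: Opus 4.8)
The plan is to fix the decision variable $\xVar$ and study the inner maximization $\max_{\probHead\in\ambiguityDR}\inner$ in isolation, showing that under the hypotheses its optimal value is exactly $1/\constC$ times the inner maximum appearing in the RC problem (\ref{eq:knapsackRC2}). The claim about minimizers then follows immediately: scaling an objective by the positive constant $1/\constC$ leaves its set of minimizers unchanged. To begin, I would substitute the uniform reference $\refProb=1/\dimID$ into the DR ball (\ref{eq:ambiguity}). Writing $\prob=\densRatio\,\refProb=\densRatio/\dimID$, the membership condition $\probHead\in\probability$ together with $\densRatio\le1+\const$ becomes the box-and-simplex constraints
\begin{equation*}
0\le\prob\le\frac{1+\const}{\dimID}=\frac{1}{\constC},\qquad \sum_{\ID\in\setID}\prob=1,
\end{equation*}
where the identity $(1+\const)/\dimID=1/\constC$ is precisely the definition $\constC=\dimID/(1+\const)$. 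Hence the inner problem is the linear program $\max\sum_{\ID\in\setID}\prob\,\targetFcn$ over this feasible set.

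The core step is to solve this linear program explicitly. I would show that an optimal solution places the maximal admissible mass $1/\constC$ on the $\constC$ indices carrying the largest costs $\targetFcn$ and zero mass elsewhere. This follows from an exchange (majorization) argument: given any feasible $\probHead$, if some index among the $\constC$ largest costs carries mass below $1/\constC$ while some index with a smaller cost carries positive mass, then shifting mass from the latter to the former does not decrease the objective; iterating drives $\probHead$ to the greedy allocation. The assumption that $\constC$ is a positive integer is essential here, since it guarantees that $\constC$ indices at mass $1/\constC$ exactly exhaust the total mass $1$, so the greedy allocation is feasible and no fractional boundary index is required. Consequently the optimal value equals $\frac{1}{\constC}\sum_{l=1}^{\constC}\targetFcnHead(\xVar,\,\ID_l^*)$, where $\ID_1^*,\dots,\ID_{\constC}^*$ are distinct indices attaining the $\constC$ largest costs.

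Because the sum $\sum_{l=1}^{\constC}\targetFcnHead(\xVar,\,\ID_l)$ is invariant under permutations of $(\ID_1,\dots,\ID_{\constC})$, maximizing it over the distinct ordered tuples in $\ambiguityC$ is the same as maximizing over size-$\constC$ subsets, so
\begin{equation*}
\max_{\probHead\in\ambiguityDR}\inner=\frac{1}{\constC}\max_{(\ID_1,\,\cdots,\,\ID_{\constC})\in\ambiguityC}\ \sum_{l=1}^{\constC}\targetFcnHead(\xVar,\,\ID_l)
\end{equation*}
holds pointwise in $\xVar$. Taking the minimum over $\xVar\in\setXVar$ and using that $1/\constC>0$ preserves the set of minimizers identifies the minimizers of (\ref{eq:DROC}) associated with (\ref{eq:ambiguity}) with those of (\ref{eq:knapsackRC2}).

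The main obstacle I anticipate is the rigorous justification of the linear-program step: one must confirm that the greedy top-$\constC$ allocation is genuinely optimal and, crucially, that integrality of $\constC$ rules out optimal solutions with a fractional index sitting strictly between $0$ and the capacity $1/\constC$. I would make this airtight either via the exchange argument above or, equivalently, by characterizing the vertices of the box-plus-simplex polytope and observing that integrality of $\constC$ forces every vertex to have exactly $\constC$ coordinates equal to $1/\constC$ and the remaining coordinates equal to $0$. Ties among the costs (when the $\constC$-th and $(\constC+1)$-th largest costs coincide) require only the remark that the optimal \emph{value} is unaffected, even though the maximizing subset need not be unique.
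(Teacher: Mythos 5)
Your proposal is correct, but it proves the theorem by a genuinely different route than the paper. The paper's proof of Theorem \ref{thm:knapsack} is a one-line appeal to Lemma \ref{lem:lagrangeDual} (iv), whose own proof is built on the duality/CVaR machinery already developed: statements (i) and (iii) of that lemma identify the worst-case expectation over $\ambiguityDR$ with $(1-\beta)^{-1}\,\expectation{\refProb}{\maxFcn{0,~\targetFcn-\targetVaR}}+\targetVaR$ at $\beta=\const/\,(1+\const)$, and statement (iv) then sorts the costs, uses $\distribution{\refProb}{\targetFcn\ge\targetFcnHead(\xVar,\,\ID_\constC^*)}=\constC/\,\dimID\ge1-\beta$ to pin the VaR to the $\constC$-th largest cost, and collapses the CVaR expression to the average of the top $\constC$ costs. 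You instead bypass duality and VaR entirely: substituting the uniform reference turns $\ambiguityDR$ into the box-plus-simplex polytope $\{0\le\prob\le1/\,\constC,~\sum_\ID\prob=1\}$, and the inner problem becomes a fractional-knapsack linear program solved by an exchange (or vertex-characterization) argument, with integrality of $\constC$ guaranteeing that every vertex has exactly $\constC$ coordinates at capacity $1/\,\constC$ — the same place the paper's proof uses integrality, there to make the VaR identification work. Both arguments are sound and use the hypotheses identically in substance; yours is more elementary and self-contained (a reader needs no CVaR background, and the tie-handling remark you add is exactly right), while the paper's buys economy and unification, since one lemma simultaneously yields the CVaR interpretation of Theorem \ref{thm:CVaR} and the deterministic RC interpretation, exhibiting them as two readings of the same dual computation. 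Your final step — that multiplying the pointwise-equal objective by the positive constant $1/\,\constC$ preserves the minimizer set — is the same observation implicit in the paper's statement of Lemma \ref{lem:lagrangeDual} (iv), where the RC sum appears divided by $\constC$.
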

}

\subsection{Proofs of Theorems \ref{thm:strongDualL} and \ref{thm:variance}}
\label{sec:subResult1}
\textcolor{black}{
We prove Theorems \ref{thm:strongDualL} and \ref{thm:variance} after establishing Lemmas \ref{lem:dual} and \ref{lem:lagrangeDualL}, respectively. We consider the following ball: 
\begin{equation}\label{eq:someBall}
\ambiguity=\{\probHead\in\probability\mid \forall j\in\{1,\,\cdots,\,b\},~f_j(\densRatioHead_1,\,\cdots,\,\densRatioHead_\dimID)\le0\}.   
\end{equation}
Here, $f_j:\realPlus^\dimID\rightarrow\real$ for each $j\in\{1,\,\cdots,\,b\}$ is a function that defines the ball. 
Furthermore, we consider the worst-case expectation of the cost function within the ball (\ref{eq:someBall}).
\begin{equation}\label{eq:someWorst}
\innerMax. 
\end{equation}
Furthermore, we consider the Lagrange dual problem \cite[Section 5.2]{boyd2004convex} of the worst-case expectation (\ref{eq:someWorst}) for each $\xVar\in\setXVar$: 
\begin{equation}\label{eq:someDual}
\inf_{(\ineqVar_1,\,\cdots,\,\ineqVar_b,\,\eqVar)\in\realPlus^\dimID\times\real}\quad \lagrangeDualHead(\ineqVar_1,\,\cdots,\,\ineqVar_b,\,\eqVar), 
\end{equation}
\begin{equation}\label{eq:someLagrange}
\begin{split}
&\lagrangeDualHead(\ineqVar_1,\,\cdots,\,\ineqVar_b,\,\eqVar) \\
&=\sup_{(\densRatioHead_1,\,\cdots,\,\densRatioHead_\dimID)\in\realPlus^\dimID} 
\expectation{\refProb}{\densRatio\targetFcn} \\ 
&\quad\quad\quad-\sum_{j\in\{1,\,\cdots,\,b\}}\ineqVar_j f_j(\densRatioHead_1,\,\cdots,\,\densRatioHead_\dimID)+\eqVar(1-\expectation{\refProb}{\densRatio}). 
\end{split}
\end{equation}
Here, $\lagrangeDualHead:\setXVar\times\realPlus\times\realPlus\times\cdots\times\real\rightarrow\real\cup\{\infty\}$ is the Lagrange dual function \cite[Section 5.1]{boyd2004convex} associated with the worst-case expectation (\ref{eq:someWorst}). The symbol $\ineqVar_j\in\realPlus$ is the Lagrange multiplier that corresponds to the inequality constraint $f_j(\densRatioHead_1,\,\cdots,\,\densRatioHead_\dimID)\le0$ for each $j\in\{1,\,\cdots,\,b\}$. $\eqVar$ is also the Lagrange multiplier that corresponds to the equality constraint $1-\expectation{\refProb}{\densRatio}=0$. 
$\lagrangeDualHead(\ineqVar_1,\,\cdots,\,\ineqVar_b,\,\eqVar)$ in (\ref{eq:someLagrange}) is immediately  derived from the Lagrangian \cite[Section 5.2]{boyd2004convex}. 
\begin{lem}[Duality of the Worst Expectation in Some Balls]\label{lem:dual}
The Lagrange dual problem in (\ref{eq:someDual}) satisfies the following properties:
\begin{enumerate}
\renewcommand{\labelenumi}{(\roman{enumi})}
\item 
Suppose that the ball defined in (\ref{eq:someBall}) is a convex set and contains a strictly feasible point $\probHead\in\relint(\ambiguity)$ such that $f_j(\densRatioHead_1,\,\cdots,\,\densRatioHead_\dimID)<0$ for all $j\in\{1,\,\cdots,\,b\}$. 
Then, for each $\xVar\in\setXVar$, the worst-case expectation (\ref{eq:someWorst}) within the ball defined in (\ref{eq:someBall}) is equivalent to its dual problem in (\ref{eq:someDual}). 
\item If the cost function $\targetFcn$ is convex on $\setXVar$ for each $\ID\in\setID$, $\lagrangeDualHead(\ineqVar_1,\,\cdots,\,\ineqVar_b,\,\eqVar)$ is convex on $\setXVar\times\realPlus\times\realPlus\times\cdots\times\real$.
\item 
Suppose that $\sum_{j\in\{1,\,\cdots,\,b\}}\ineqVar_j f_j(\densRatioHead_1,\,\cdots,\,\densRatioHead_\dimID)$ is of class $C^1$ and a strictly convex function on $\realPlusPlus^\dimID$ for all $\ineqVar_j\ne0$ and for all $j\in\{1,\,\cdots,\,b\}$. For each $(\xVar,\,\ineqVar_1,\,\cdots,\,\ineqVar_b,\,\eqVar)\in\setXVar\times\realPlusPlus\times\realPlusPlus\times\cdots\real$,     
if there exists $(\densRatioHead_1^*,\,\cdots,\,\densRatioHead_\dimID^*)\in\realPlusPlus^\dimID$ where the gradient of the objective function in the right-hand side of (\ref{eq:someLagrange}) is zero, $(\densRatioHead_1^*,\,\cdots,\,\densRatioHead_\dimID^*)$ is the unique maximizer. 
\end{enumerate}
\end{lem}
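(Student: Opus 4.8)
The plan is to rewrite the worst-case expectation (\ref{eq:someWorst}) in terms of the density ratios and then dispatch the three parts by standard convex-analytic facts. Writing $\prob = \densRatio\,\refProb$ turns the membership $\probHead \in \probability$ into the nonnegativity $\densRatio \ge 0$ together with the affine normalization $\expectation{\refProb}{\densRatio} = 1$, while the objective becomes the linear functional $\expectation{\refProb}{\densRatio\targetFcn}$ in $(\densRatioHead_1,\ldots,\densRatioHead_\dimID)$; thus (\ref{eq:someWorst}) is the maximization of a linear objective over the convex set (\ref{eq:someBall}), and (\ref{eq:someLagrange}) is precisely its Lagrangian dual function, with $\ineqVar_j \ge 0$ attached to $f_j \le 0$ and the free multiplier $\eqVar$ attached to the affine equality. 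For part (i) I would invoke Slater's condition: because the normalization and the nonnegativity constraints are affine, only the constraints $f_j \le 0$ require strict feasibility, and the hypothesis furnishes a point $\probHead \in \relint(\ambiguity)$ with $f_j < 0$ for every $j$. Together with convexity of the constraint functions (which in both instantiations are the quadratic L2 penalty and the affine DR constraints), the refined Slater theorem \cite[Section 5.2.3]{boyd2004convex} gives a zero duality gap, so (\ref{eq:someWorst}) and (\ref{eq:someDual}) share the same value for each fixed $\xVar$.

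For part (ii) I would exploit the representation of the dual function as a pointwise supremum over the density ratios. For each fixed $(\densRatioHead_1,\ldots,\densRatioHead_\dimID) \in \realPlus^\dimID$, the expression inside the supremum in (\ref{eq:someLagrange}) is affine, hence convex, in $(\ineqVar_1,\ldots,\ineqVar_b,\eqVar)$, and its only dependence on $\xVar$ is through $\expectation{\refProb}{\densRatio\targetFcn}$, which is a nonnegative (because $\densRatio \ge 0$) combination of the convex functions $\targetFcn$ and is therefore convex in $\xVar$. Hence the integrand is jointly convex in $(\xVar,\ineqVar_1,\ldots,\ineqVar_b,\eqVar)$ for each fixed density ratio, and taking the supremum over density ratios preserves convexity, which proves the claim.

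For part (iii) I would establish strict concavity of the inner objective in the density ratios. Viewed as a function of $(\densRatioHead_1,\ldots,\densRatioHead_\dimID)$ on $\realPlusPlus^\dimID$, the objective maximized in (\ref{eq:someLagrange}) is the sum of the affine terms $\expectation{\refProb}{\densRatio\targetFcn} + \eqVar(1 - \expectation{\refProb}{\densRatio})$ and the negative of $\sum_j \ineqVar_j f_j$, which by hypothesis is $C^1$ and strictly convex; the objective is therefore $C^1$ and strictly concave. For such a function the vanishing of the gradient is a sufficient condition for a global maximum, and strict concavity makes the maximizer unique, so the stationary point $(\densRatioHead_1^*,\ldots,\densRatioHead_\dimID^*)$ is the unique maximizer. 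I expect part (i) to be the main obstacle: the subtlety is to split the affine constraints from the nonaffine ones so that strict feasibility of the $f_j$ alone suffices for Slater's condition, and to confirm that the Lagrangian relaxation over the stated $f_j$ is tight, which in full generality requires the $f_j$ to be convex functions, a property the two balls of interest indeed possess. Parts (ii) and (iii) are then routine, resting on preservation of convexity under pointwise suprema and on the optimality characterization for strictly concave $C^1$ functions.
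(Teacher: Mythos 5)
Your proposal is correct and follows essentially the same route as the paper's proof: strong duality via Slater's condition for part (i), preservation of convexity under pointwise suprema for part (ii), and strict concavity plus interior stationarity for part (iii). If anything, you are slightly more careful than the paper, noting that the affine normalization and nonnegativity constraints need only feasibility under the refined Slater condition and that tightness of the Lagrangian relaxation requires the $f_j$ themselves to be convex (not merely the ball), a hypothesis the paper leaves implicit but which both instantiations satisfy.
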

}

\begin{proof}[Proof of Lemma \ref{lem:dual}]

\textcolor{black}{
Let us prove the statement (i). 
The objective function $\inner$ is linear  in $\prob$ for each $\ID\in\setID$, and $\ambiguity$ is a convex set. Hence, $\innerMaxSL$ is a convex programming problem. Furthermore, $\relint(\ambiguity)$ contains a strictly feasible point. According to Slater's conditions\cite[Section 5.2.3]{boyd2004convex}, a strong duality emerges and the statement (i) holds. 
}

\textcolor{black}{
Moreover, we prove the statement (ii) using the results from \cite[Section 3.2.3]{boyd2004convex}. This indicates that the pointwise supremum of the convex function is also convex. 
Therefore, the Lagrange dual function $\lagrangeDualHead(\ineqVar_1,\,\cdots,\,\ineqVar_b,\,\eqVar)$ is convex because the objective function of the right-hand side of (\ref{eq:someLagrange}) is convex for each $(\densRatioHead_1,\,\cdots,\,\densRatioHead_\dimID)$. 
Hence, the statement (ii) is proven. 
}

\textcolor{black}{
Subsequently, we prove the statement (iii). From the assumption introduced in the statement (iii), the objective function in the right-hand side of (\ref{eq:someLagrange}) is clearly a strictly concave function of class $C^1$ in $(\densRatioHead_1,\,\cdots,\,\densRatioHead_\dimID)\in\realPlusPlus^\dimID$ for all $\ineqVar_j\ne0$, $j\in\{1,2,\}$. Therefore, at most one maximizer exists for the objective function \cite[Section 4.2.1]{boyd2004convex}. The existence of this maximizer follows from the assumption that there exists a point where the gradient of the objective function in the right-hand side of (\ref{eq:someLagrange}) is zero on $\realPlusPlus^\dimID$ for each $(\xVar,\,\ineqVar_1,\,\cdots,\,\ineqVar_b,\,\eqVar)\in\setXVar\times\realPlusPlus\times\realPlusPlus\times\cdots\real$. 
Hence, the statement (iii) is proven. 
}
\end{proof}

\textcolor{black}{
\begin{rem}[Differentiable Balls]
Lemma \ref{lem:dual} (i) states that the worst-case expectation in (\ref{eq:someWorst}) can be reformulated as a minimization problem in (\ref{eq:someDual}). 
Furthermore, Lemma \ref{lem:dual} (ii) states that this reformulation can result in convex optimization. 
By Lemma \ref{lem:dual} (iii), solving (\ref{eq:someWorst}) reduces to identifying the stationary point that corresponds to the maximizer.  
\end{rem}
\begin{rem}[Differentiable Subsets of Total Variation Ball]
We can demonstrate that the weighted L2 ball $\ambiguity=\ambiguityL$ in (\ref{eq:ambiguityL}) and the DR ball $\ambiguity=\ambiguityDR$ in (\ref{eq:ambiguity}) are expressed by differentiable distance metrics that satisfy the sufficient conditions in Lemma \ref{lem:dual} (iii), in contrast to the TV ball $\ambiguityTV$.
\begin{equation}\label{eq:ambiguityTV}
\ambiguityTV = \{\probHead\in\probability\mid\expectation{\refProb}{|\densRatio-1|}\le\const\}. 
\end{equation}
In several cases, the TV ball is defined as half of the L1 ball. 
Both the weighted L2 and DR ball establish an inclusion relationship, as shown in Proposition \ref{prop:inclusion} in the Appendix.  
\end{rem}
}

\textcolor{black}{
\begin{lem}[Strong Duality of the Worst Expectation in L2 Balls]\label{lem:lagrangeDualL}
The following properties hold.
\begin{enumerate}
\renewcommand{\labelenumi}{(\roman{enumi})}
\item For every $\xVar\in\setXVar$, the function $\innerDualL$ in (\ref{eq:zeroWorstL}) holds:
\begin{multline}\label{eq:innerL}
\max_{\probHead\in\ambiguityL}\quad\inner = \\
\inf_{\KKTVar\in\realPlus\times\real}\quad \innerDualL.  
\end{multline}
\item If the cost function $\targetFcn$ is convex on $\setXVar$ for each $\ID\in\setID$, the objective function in the right-hand side of (\ref{eq:innerL}), $\innerDualL$ is convex on $\setXVar\times\realPlus\times\real$.
\item Given $\xVar\in\setXVar$, suppose that $\min_{\ID\in\setID}\targetFcn>\expectation{\refProb}{\targetFcn}-\sqrt{\variance{\refProb}{\targetFcn}}/~\const$ is satisfied. 
The right-hand side of (\ref{eq:innerL}) is as follows:
\begin{multline}
\inf_{\KKTVar\in\realPlus\times\real}\quad \innerDualL \\
\begin{aligned}
=\min\Bigg\{&\max_{\ID\in\setID} \targetFcn,~ \\ 
&\expectation{\refProb}{\targetFcn}+\const\sqrt{\variance{\refProb}{\targetFcn}}\Bigg\}. 
\end{aligned}
\end{multline}
\item Given $\xVar\in\setXVar$, suppose that $\max_{\ID\in\setID}\targetFcn>\expectation{\refProb}{\targetFcn}+\const\sqrt{\variance{\refProb}{\targetFcn}}$ and $\min_{\ID\in\setID}\targetFcn>\expectation{\refProb}{\targetFcn}-\sqrt{\variance{\refProb}{\targetFcn}}/~\const$ are satisfied. Then, there exists some minimizer to the right-hand side of (\ref{eq:innerL}) that satisfies $\KKTVar\in\realPlusPlus\times\real$. 
\end{enumerate}
\end{lem}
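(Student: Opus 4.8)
The plan is to specialize the general duality result of Lemma~\ref{lem:dual} to the single smooth constraint defining the weighted L2 ball, and then to carry out the inner maximization in closed form. For statement (i), I would write $\ambiguityL$ in the form of (\ref{eq:someBall}) with $b=1$ and $f_1(\densRatioHead_1,\,\cdots,\,\densRatioHead_\dimID)=\expectation{\refProb}{(\densRatio-1)^2}-\const^2$, which defines the same set as (\ref{eq:ambiguityL}) since both sides of the defining inequality are nonnegative. This $f_1$ is a sum of convex quadratics, so $\ambiguityL$ is convex, and the center $\probHead=\refProbHead$ (all $\densRatio=1$) is strictly feasible because $f_1=-\const^2<0$; hence Slater's condition holds and Lemma~\ref{lem:dual}~(i) gives strong duality. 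It then remains to evaluate the dual function (\ref{eq:someLagrange}) explicitly. For $\ineqVar\ne0$ the inner objective is, for each $\ID$, a strictly concave quadratic in $\densRatio$ whose unconstrained maximizer is $1+(\targetFcn-\eqVar)/(2\ineqVar)$; projecting onto $\realPlus$ yields $\densRatio^*=\maxFcn{0,~(\targetFcn+2\ineqVar-\eqVar)/(2\ineqVar)}$, and substituting back reproduces $\dualL$ in (\ref{eq:worstL}) after collecting the constant terms. For $\ineqVar=0$ the inner objective is linear in each $\densRatio$, so the supremum is $\eqVar$ when $\max_{\ID\in\setID}\targetFcn\le\eqVar$ and $+\infty$ otherwise, which is exactly (\ref{eq:zeroWorstL}); this establishes (\ref{eq:innerL}). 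Statement (ii) is then immediate: since $\innerDualL$ coincides with the Lagrange dual function of the L2 ball, Lemma~\ref{lem:dual}~(ii) applies verbatim once $\targetFcn$ is convex in $\xVar$.

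For statement (iii), I would split the infimum over $\KKTVar\in\realPlus\times\real$ into the branch $\ineqVar=0$ and the branch $\ineqVar>0$. On the first branch, (\ref{eq:zeroWorstL}) forces $\eqVar\ge\max_{\ID\in\setID}\targetFcn$, so the partial infimum equals $\max_{\ID\in\setID}\targetFcn$. On the second branch, the key observation is that whenever the positive part in (\ref{eq:worstL}) is inactive, i.e. $\targetFcn+2\ineqVar-\eqVar\ge0$ for every $\ID$ (equivalently $\min_{\ID\in\setID}\targetFcn\ge\eqVar-2\ineqVar$), expanding the square and combining with the remaining affine terms yields $\dualL=\expectation{\refProb}{\targetFcn}+\expectation{\refProb}{(\targetFcn-\eqVar)^2}/(4\ineqVar)+\ineqVar\const^2$. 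Minimizing this first over $\eqVar$ gives $\eqVar^*=\expectation{\refProb}{\targetFcn}$ (reducing the numerator to the variance), and then over $\ineqVar$ gives $\ineqVar^*=\sqrt{\variance{\refProb}{\targetFcn}}/(2\const)$, with optimal value $\expectation{\refProb}{\targetFcn}+\const\sqrt{\variance{\refProb}{\targetFcn}}$.

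I would then check that the hypothesis $\min_{\ID\in\setID}\targetFcn>\expectation{\refProb}{\targetFcn}-\sqrt{\variance{\refProb}{\targetFcn}}/\const$ is exactly the condition $\min_{\ID\in\setID}\targetFcn>\eqVar^*-2\ineqVar^*$ placing $(\eqVar^*,\ineqVar^*)$ strictly inside the inactive region, so that $\dualL$ is differentiable there and the vanishing gradient I computed is a genuine stationary point. Because $\dualL$ is convex on the open convex set $\{\ineqVar>0\}$ by statement (ii), this stationary point is the global minimizer of that branch. Taking the smaller of the two branch values then yields the claimed $\min\{\max_{\ID\in\setID}\targetFcn,\,\expectation{\refProb}{\targetFcn}+\const\sqrt{\variance{\refProb}{\targetFcn}}\}$. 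Statement (iv) follows by adding the hypothesis $\max_{\ID\in\setID}\targetFcn>\expectation{\refProb}{\targetFcn}+\const\sqrt{\variance{\refProb}{\targetFcn}}$, which forces $\variance{\refProb}{\targetFcn}>0$ and makes the interior value strictly smaller than the $\ineqVar=0$ value $\max_{\ID\in\setID}\targetFcn$; hence the minimum in (iii) is attained only on the $\ineqVar>0$ branch, at $\ineqVar^*=\sqrt{\variance{\refProb}{\targetFcn}}/(2\const)>0$, giving a minimizer with $\KKTVar\in\realPlusPlus\times\real$.

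The main obstacle I anticipate is statement (iii): beyond the bookkeeping that simplifies (\ref{eq:worstL}) to the mean-plus-scaled-standard-deviation expression, the delicate point is arguing that the computed stationary point is the \emph{global} minimizer over $\ineqVar>0$ rather than merely a local one. This hinges on combining the convexity from (ii) with the strict inactivity of the positive part guaranteed by the min-hypothesis, so that the smooth reduced formula genuinely represents $\dualL$ near the candidate optimum and its zero gradient certifies optimality.
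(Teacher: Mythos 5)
Your proposal is correct and follows essentially the same route as the paper's proof: strong duality via Lemma~\ref{lem:dual} with the Slater point $\probHead=\refProbHead$, the explicit inner maximizer $\densRatio^*=\maxFcn{0,~(\targetFcn+2\ineqVar-\eqVar)/(2\ineqVar)}$, the case split $\ineqVar=0$ versus $\ineqVar>0$, and the same candidate pair $(\ineqVar^*,\,\eqVar^*)=\bigl(\sqrt{\variance{\refProb}{\targetFcn}}/(2\const),\,\expectation{\refProb}{\targetFcn}\bigr)$ placed strictly inside the inactive region by the min-hypothesis. The only (equivalent) difference is in (iii)--(iv), where the paper certifies this pair through the KKT conditions of the inner maximization, while you minimize the reduced smooth expression in closed form and invoke the convexity from (ii) to upgrade the interior stationary point to a global minimizer of the $\ineqVar>0$ branch.
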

}

\begin{proof}[Proof of Lemma \ref{lem:lagrangeDualL}]
\textcolor{black}{
$\ambiguity=\ambiguityL$ in (\ref{eq:ambiguityL}) is a convex set. Furthermore, $\relint(\ambiguityL)$ is a non-empty set if $\const>0$. Hence, $\probHead\in\relint(\ambiguityL)$ exists that is strictly feasible; in particular, it satisfies $\expectation{\refProb}{(\densRatio-1)^2}<\const$ for all $\ID\in\setID$. By Lemma \ref{lem:dual} (i) and (ii), a strong dual problem arises as follows:
\begin{equation*}
\inf_{\KKTVar\in\realPlus\times\real}\lagrangeDualL=\max_{\probHead\in\ambiguityL}\quad\inner, 
\end{equation*}
\begin{equation*}
\lagrangeDualL=\displaystyle\sup_{(\densRatioHead_1,\,\cdots,\,\densRatioHead_\dimID)\in\realPlus^\dimID}\quad\lagrangianL,
\end{equation*}
\begin{multline*}
\lagrangianL
=\expectation{\refProb}{\densRatio\targetFcn} \\
-\ineqVar(\expectation{\refProb}{(1-\densRatio)^2}-\const^2)
+\eqVar(1-\expectation{\refProb}{\densRatio}).
\end{multline*}
Here, $\lagrangeDualHeadL:\setXVar\times\real\times\real \rightarrow \real\cup\{\infty\}$ and $\lagrangianHeadL: \setXVar\times\realPlus^\dimID\times\real\times\real \rightarrow \real$ are the Lagrange dual function and Lagrangian, respectively, associated with the problem in (\ref{eq:innerL}) with $\ambiguityL$. 
}

\textcolor{black}{
The statement (i) can be proved by explicitly deriving the Lagrange dual function $\lagrangeDualL$. First, we consider the case $\ineqVar>0$. The Lagrangian is concave and quadratic in $\densRatio$; thus, the gradient of that in $\densRatio$ must be zero at the maximizer $\densRatio^*$, or $\densRatio^*$ must lie on the boundary of the domain $[0,\infty)$, as follows:
\begin{equation*}
\refProb\,(\targetFcn - 2\ineqVar(\densRatio^*{-1})-\eqVar)
\begin{cases}
=0, &(\densRatio^*>0), \\
\le0, &(\densRatio^*=0).
\end{cases}
\end{equation*}
Therefore, 
\begin{equation}\label{eq:worstDensL}
\begin{split}
\densRatio^*&=
\begin{cases}
\frac{\targetFcn+2\ineqVar-\eqVar}{2\ineqVar}, &(\targetFcn+2\ineqVar-\eqVar>0), \\
0, &(\targetFcn+2\ineqVar-\eqVar\le0),
\end{cases}
\\
&=\maxFcn{0,~\frac{\targetFcn+2\ineqVar-\eqVar}{2\ineqVar}}.
\end{split} 
\end{equation}
Hence, the Lagrangian becomes $\dualL$ in (\ref{eq:worstL}):
\begin{equation*}
\begin{split}
&\forall\ineqVar\in\realPlusPlus, \quad \\ &\lagrangianHeadL(\densRatioHead_1^*,\,\cdots,\,\densRatioHead_\dimID^*,\,\ineqVar,\,\eqVar) \\
&= 
\begin{cases}
\begin{aligned}
&\ineqVar\expectation{\refProb}{\left(\frac{\targetFcn+2\ineqVar-\eqVar}{2\ineqVar}\right)^2}\\
&+\ineqVar(\const^2-1)+\eqVar, 
\end{aligned}
&
\begin{aligned}
(\targetFcn&+2\ineqVar\\&-\eqVar>0), 
\end{aligned}
\\
\ineqVar(\const^2-1)+\eqVar, &
\begin{aligned}
(\targetFcn&+2\ineqVar\\&-\eqVar\le0), 
\end{aligned}
\end{cases}
\\
&= \ineqVar\expectation{\refProb}{\maxFcn{0,~\frac{\targetFcn+2\ineqVar-\eqVar}{2\ineqVar}}^2}+\ineqVar(\const^2-1)+\eqVar, \\
&= \dualL.
\end{split}
\end{equation*}
Second, let us consider the case $\ineqVar=0$. Then, the Lagrangian $\lagrangianL$ is affine in $\densRatioHead_\ID$ over $\realPlus$, and its value lies in the following intervals: 
\begin{equation*}
\begin{matrix}
\refProb\densRatio\targetFcn+\\
\quad\quad\quad
\refProb\eqVar(1-\densRatio)\in
\end{matrix}
\begin{cases}
[\refProb\eqVar,\,\infty), & \text{($\eqVar<\targetFcn$)}, \\
\{\refProb\eqVar\}, & \text{($\eqVar=\targetFcn$)}, \\
(-\infty,\,\refProb\eqVar], & \text{($\eqVar>\targetFcn$)}. \\
\end{cases}
\end{equation*}
Finally, by considering both cases, we can explicitly denote it as $\innerDualL$ in (\ref{eq:zeroWorstL}):
\begin{equation*}
\forall\ineqVar\in\realPlus, \quad \lagrangeDualL = \innerDualL.
\end{equation*}
Hence, the statement (i) is proven.
}

\textcolor{black}{
Furthermore, we prove the statement (ii). This statement follows directly from Lemma \ref{lem:dual} (ii). 
}

\textcolor{black}{
We also prove the statement (iii). The proof is based on the results of the KKT optimality conditions shown in \cite{boyd2004convex}. 
Let $(\ineqVar^*,\,\eqVar^*)$ denote a minimizer with respect to $\KKTVar$. 
}

\textcolor{black}{
First, we consider the case that $\ineqVar^*=0$. Based on Lemma \ref{lem:lagrangeDualL} (i), the infimum value of $\innerDualL$ is equivalent to the maximum value of $\inner$ for each $\xVar$. By the definition of the target system, $\inner<\infty$; hence, it follows that $\dualExtendedHeadL(\xVar,\,\ineqVar^*,\,\eqVar^*)$ must also be finite. Therefore, we must have: 
\begin{equation*}
\forall \ID\in\setID, \quad \targetFcn\le\eqVar^*, \quad \dualExtendedHeadL(\xVar,\,0,\,\eqVar^*)=\eqVar^*,
\end{equation*}
that implies:
\begin{equation*}
\dualExtendedHeadL(\xVar,\,0,\,\eqVar^*) = \max_{\ID\in\setID} \quad \targetFcn.
\end{equation*}
}

\textcolor{black}{
Second, we consider the other case that $\ineqVar^*>0$. According to the KKT optimality conditions \cite{boyd2004convex}, the following equations hold.
\begin{equation*}
\ineqVar^* (\const^2-\expectation{\refProb}{(1-\densRatio^*)^2})=0,
\end{equation*}
\begin{equation*}
1-\expectation{\refProb}{\densRatio^*}=0.
\end{equation*}
From $\ineqVar^*>0$ and $ \densRatio^*$ in (\ref{eq:worstDensL}), 
the KKT optimality conditions are reformulated as follows: 
\begin{equation*}
\expectation{\refProb}{\maxFcn{-1,~\frac{\targetFcn-\eqVar^*}{2\ineqVar^*}}^2}=\const^2, 
\end{equation*}
\begin{equation*}
\expectation{\refProb}{\maxFcn{-1,~\frac{\targetFcn-\eqVar^*}{2\ineqVar^*}}}=0.
\end{equation*}
Furthermore, from the assumption introduced in the statement (iii), the following inequality holds.  
\begin{equation}\label{eq:Assumption}
\min_{\ID\in\setID}\targetFcn>\expectation{\refProb}{\targetFcn}-\sqrt{\variance{\refProb}{\targetFcn}}/~\const.  
\end{equation}
Under this assumption, we observe that $\variance{\refProb}{\targetFcn}\ne0$. Additionally, we observe that the pair $(\ineqVar^*,\,\eqVar^*)$ is as follows:
\begin{equation}\label{eq:OPTineq}
\ineqVar^*=\frac{\sqrt{\variance{\refProb}{\targetFcn}}}{2\const}>0,
\end{equation}
\begin{equation}\label{eq:OPTeq}
\eqVar^*=\expectation{\refProb}{\targetFcn}, 
\end{equation}
because $(\targetFcn-\eqVar^*)/\,(2\ineqVar^*)>-1$ follows for all $\ID\in\setID$ by substituting (\ref{eq:OPTineq}) and (\ref{eq:OPTeq}) into (\ref{eq:Assumption}), and the KKT optimality conditions are reformulated as follows:
\begin{equation}\label{eq:KKTineq}
\expectation{\refProb}{\left(\frac{\targetFcn-\eqVar^*}{2\ineqVar^*}\right)^2}=\const^2, 
\end{equation}
\begin{equation}\label{eq:KKTeq}
\expectation{\refProb}{\frac{\targetFcn-\eqVar^*}{2\ineqVar^*}}=0.
\end{equation}
Hence, KKT optimality conditions (\ref{eq:KKTineq}) and (\ref{eq:KKTeq}), and these minimizers $(\ineqVar^*,\,\eqVar^*)$ yield the infimum value provided by:
\begin{equation*}
\begin{split}
\dualExtendedHeadL&\left(\xVar,\,\ineqVar^*,\,\eqVar^*\right) \\
&=\dualHeadL\left(\xVar,\,\ineqVar^*,\,\eqVar^*\right), \\ 
&=\ineqVar^*\expectation{\refProb}{\left(\maxFcn{-1,~\frac{\targetFcn-\eqVar^*}{2\ineqVar^*}}+1\right)^2} \\
&\quad +\ineqVar^*(\const^2-1)+\eqVar^*, \\
&=\ineqVar^*(\const^2+1)+\ineqVar^*(\const^2-1)+\eqVar^*, \\
&=\expectation{\refProb}{\targetFcn}+\const\sqrt{\variance{\refProb}{\targetFcn}}.
\end{split}
\end{equation*}
}

\textcolor{black}{
Finally, by considering both cases, we can explicitly express it as follows:
\begin{multline*}
\innerMinL \\ 
\begin{aligned}
=\min\Bigg\{&\max_{\ID\in\setID} \targetFcn,~ \\
&\expectation{\refProb}{\targetFcn}+\const\sqrt{\variance{\refProb}{\targetFcn}}\Bigg\}.
\end{aligned}
\end{multline*}
Hence, the statement (iii) is proven. 
}

\textcolor{black}{
Furthermore, we prove the statement (iv). We first introduce the assumption in the statement (iv) that $\max_{\ID\in\setID}\targetFcn>\expectation{\refProb}{\targetFcn}+\const\sqrt{\variance{\refProb}{\targetFcn}}$. Then, the minimum value is denoted by $(\ineqVar^*,\,\eqVar^*)\in\realPlusPlus\times\real$ in (\ref{eq:OPTineq}) and (\ref{eq:OPTeq}), hence:
\begin{equation*}
\begin{split}
&\innerMinL \\ &\quad\quad\quad=\expectation{\refProb}{\targetFcn}+\const\sqrt{\variance{\refProb}{\targetFcn}}, \\ 
&\quad\quad\quad=\dualHeadL(\xVar,\,\ineqVar^*,\,\eqVar^*).
\end{split}
\end{equation*}
Hence, the statement (iv) is proven. 
}
\end{proof}

\begin{proof}[Proof of Theorem \ref{thm:strongDualL}]
\textcolor{black}{
We first prove the statement (i). For each $\xVar$ in $\setXVar$, Lemma \ref{lem:lagrangeDualL} (i) states that the maximum value of (\ref{eq:DROC}) associated with the weighted L2 ball (\ref{eq:ambiguityL}) is equal to the infimum of (\ref{eq:dualDROCL}).  
This establishes the statement (i).
}

\textcolor{black}{
Subsequently, we prove the statement (ii). This follows directly from the statements (i) and Lemma \ref{lem:lagrangeDualL} (iv). 
}

\textcolor{black}{
Third, the statement (iii) is proved. By generalizing the results in \cite[Section 3.2.3]{boyd2004convex} to strictly convex functions, the objective function in (\ref{eq:DROC}) bocomes strictly convex if $\targetFcn$ is strictly convex. Therefore, the set of minimizers to the problem contains at most one point\cite[Section 4.2.1]{boyd2004convex}. Furthermore, the extreme value theorem \cite{keisler2012elementary} guarantees that the set of minimizers contains at least one point if $\targetFcn$ is continuous and $\setXVar$ is a bounded closed convex set. Assuming that the conditions for $\targetFcn$ and $\setXVar$ are satisfied, the minimizer must be unique.   
}

\textcolor{black}{
Finally, we prove the statement (iv). We first introduce the assumption in the statement (iv) that $\targetFcn$ is convex and of  class $C^k$ on an open convex set $\setXVar$. Subsequently, Lemma \ref{lem:lagrangeDualL} (ii) states 
the objective function $\innerDualL$ is convex on $\setXVar\times\realPlus\times\real$. In addition, the function $\dualL$ is of class $C^1$. This is because, 
the derivative of the expectation in $\dualL$ is the sum of finitely many functions. 
Thus, the first derivative exists because the function $\dualL$ is continuous on all point, in particular at those satisfying $\targetFcn+2\ineqVar-\eqVar=0$. Hence, the statement (iv) is proven.
}
\end{proof}

\begin{proof}[Proof of Theorem \ref{thm:variance}]
\textcolor{black}{
From Lemma \ref{lem:lagrangeDualL} (i), we substitute the worst-case expectation, $\max_{\probHead\in\ambiguityL}\inner$, associated with the weighted L2 ball (\ref{eq:ambiguityL}) into the infimum of $\dualExtendedL$. The statement follows directly by Lemma \ref{lem:lagrangeDualL} (iii) and the assumption that satisfies $\max_{\ID\in\setID}\targetFcnHead(\xVar^*,\,\ID)>\expectation{\refProb}{\targetFcnHead(\xVar^*,\,\ID)}+\const\sqrt{\variance{\refProb}{\targetFcnHead(\xVar^*,\,\ID)}}$ for the minimizer $\xVar^*$ to (\ref{eq:DROC}). 
}
\end{proof}

\subsection{Proofs of Theorems \ref{thm:strongDual}, \ref{thm:CVaR} and \ref{thm:knapsack}}
\label{sec:subResult2}

\textcolor{black}{
We prove Theorems \ref{thm:strongDual},  \ref{thm:CVaR} and \ref{thm:knapsack} by establishing Lemmas \ref{lem:ambiguity} and \ref{lem:lagrangeDual}.
We define the following set $\hatAmbiguity$ denoted as 
\begin{equation}\label{eq:hatAmbiguity}
\hatAmbiguity \coloneqq \{\probHead\in\probability\mid\forall \ID\in\setID, ~ \densRatio\lnFcn{\densRatio}\le\densRatio\lnFcn{1+\const}\}, 
\end{equation}
where this study defines $0\lnFcn{0}=0$ because of the continuity as $\lim_{\densRatio\to0^+}\densRatio\lnFcn{\densRatio}=0$ \cite[Section 2.1]{cover1999elements}. 
\begin{lem}[Density-Ratio Balls Reformulation]\label{lem:ambiguity}
The set $\hatAmbiguity$ in (\ref{eq:hatAmbiguity}) is equivalent to the DR ball $\ambiguityDR$ in (\ref{eq:ambiguity}). 
\end{lem}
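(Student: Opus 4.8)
The plan is to reduce the asserted set equality to a pointwise equivalence of the two defining constraints, indexed by $\ID\in\setID$. Both $\hatAmbiguity$ in (\ref{eq:hatAmbiguity}) and $\ambiguityDR$ in (\ref{eq:ambiguity}) consist of exactly those $\probHead\in\probability$ whose density ratios $\densRatio\coloneqq\prob/\,\refProb\in\realPlus$ satisfy a constraint of the form ``for all $\ID\in\setID$''. Since the ambient side condition $\probHead\in\probability$ is shared, it suffices to show, for a single index $\ID$ and the associated $\densRatio\in\realPlus$, that
\[
\densRatio\lnFcn{\densRatio}\le\densRatio\lnFcn{1+\const} \iff \densRatio\le1+\const,
\]
where $1+\const>1$ because $\const>0$.

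First I would dispose of the boundary case $\densRatio=0$. By the continuity convention $0\lnFcn{0}=0$ adopted in (\ref{eq:hatAmbiguity}), both sides of the left-hand inequality vanish, so it holds; and $\densRatio=0\le1+\const$ holds trivially. Hence the equivalence is valid at $\densRatio=0$. Next, for $\densRatio>0$ I would divide the left-hand inequality through by $\densRatio>0$, which preserves its direction, obtaining the equivalent statement $\lnFcn{\densRatio}\le\lnFcn{1+\const}$. Because $\ln$ is strictly increasing on $\realPlusPlus$, this is in turn equivalent to $\densRatio\le1+\const$. This settles the pointwise equivalence for every admissible $\densRatio$.

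Finally, intersecting this pointwise equivalence over all $\ID\in\setID$ shows that $\probHead\in\hatAmbiguity$ if and only if $\probHead\in\ambiguityDR$, which is precisely the claimed set equality. I do not expect any serious obstacle here; the only delicate point is the boundary case $\densRatio=0$, where one must invoke the convention $0\lnFcn{0}=0$ rather than naively divide by $\densRatio$. Away from that point the argument is immediate from the strict monotonicity of $\ln$.
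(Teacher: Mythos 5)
Your proof is correct and follows essentially the same route as the paper's: the boundary case $\densRatio=0$ is handled via the convention $0\lnFcn{0}=0$, and the case $\densRatio>0$ reduces to the strict monotonicity of $\ln$ after dividing by $\densRatio$. You merely spell out the pointwise reduction and the division step that the paper leaves implicit.
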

\begin{proof}[Proof of Lemma \ref{lem:ambiguity}]
If $\densRatio=0$, the statement immediately follows. If $\densRatio\ne0$, this statement follows directly from the strictly monotonicity of $\ln$. 
\end{proof}
\begin{rem}[Strictly Convex Reformulations of Density-Ratio Balls]\label{rem:strictlyConvex}
The set $\hatAmbiguity$ in (\ref{eq:hatAmbiguity}) can be defined by strictly convex functions as $\densRatio\ln(\densRatio/\,(1+\const))\le0$ for all $\ID\in\setID$. This satisfies the properties of the existence of the maximizer to the Lagrange dual problem, as denoted in Lemma \ref{lem:dual} (iii). This strictly convex reformulation is based on the idea of a previous study \cite{hu2013kullback}. 
\end{rem}
\begin{lem}[Strong Duality of the Worst Expectation in DR Balls]\label{lem:lagrangeDual}
The following properties hold.
\begin{enumerate}
\renewcommand{\labelenumi}{(\roman{enumi})}
\item For every $\xVar\in\setXVar$, the function $\dualExtended$ in (\ref{eq:zeroWorst}) holds:
\begin{multline}\label{eq:inner}
\max_{\probHead\in\ambiguityDR}\quad\inner = \\
\inf_{\KKTVars\in\realPlus^\dimID\times\real}\innerDual. 
\end{multline}
\item If the cost function $\targetFcn$ is convex on $\setXVar$ for each $\ID\in\setID$, the objective function in the right-hand side of (\ref{eq:inner}) is convex on $\setXVar\times\realPlus^\dimID\times\real$.
\item The right-hand side of (\ref{eq:inner}) is denoted as follows:
\begin{multline}\label{eq:innerCVaR}
\innerMin \\=\inf_{\eqVar\in\real}\quad \innerCVaR,
\end{multline}
and there exists a minimizer of $\eqVar$ to the right-hand side of (\ref{eq:innerCVaR}) as follows:
\begin{multline}
\targetVaR \\ \in\argmin_{\eqVar\in\real}\innerCVaR,
\end{multline}
provided that the probability level is $\beta=\const/\,(1+\const)$.
\item Suppose that the probability level is $\beta=\const/\,(1+\const)$, that the reference distribution is uniform distribution $\refProb=1/\,\dimID$, and that $\constC=(1-\beta)\dimID$ is a positive integer. Then,
\begin{multline}
\max_{\probHead\in\ambiguityDR}\quad\inner \\ =
\max_{(\ID_1,\,\cdots,\,\ID_{\constC})\in\ambiguityC}\quad 
\sum_{l=1}^{\constC}\frac{\targetFcnHead(\xVar,\,\ID_l)}{\constC}. 
\end{multline}
\end{enumerate}
\end{lem}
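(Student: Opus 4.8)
The plan is to prove the four parts in order, deriving (ii)--(iv) from the strong-duality identity and the explicit dual objective established in (i). For (i) I would first invoke Lemma~\ref{lem:ambiguity} together with Remark~\ref{rem:strictlyConvex} to replace the affine description of $\ambiguityDR$ in (\ref{eq:ambiguity}) by the strictly convex constraints $\densRatio\lnFcn{\densRatio/(1+\const)}\le0$, $\ID\in\setID$. Taking these (scaled by $\refProb$) as the functions $f_\ID$ in (\ref{eq:someBall}) makes $\ambiguityDR$ a convex set containing the strictly feasible center $\refProbHead$, since there $\densRatio=1$ and $\lnFcn{1/(1+\const)}<0$. Lemma~\ref{lem:dual}~(i) then gives strong duality, reducing the worst-case expectation to the dual (\ref{eq:someDual}), whose objective I would evaluate explicitly by solving the inner supremum in (\ref{eq:someLagrange}) componentwise.

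For the componentwise supremum I distinguish $[\ineqVars]_\ID>0$ and $[\ineqVars]_\ID=0$. When $[\ineqVars]_\ID>0$, Lemma~\ref{lem:dual}~(iii) guarantees that the $C^1$, strictly concave objective attains its unique maximum at the stationary point; a first-order condition yields $\densRatio^*=(1+\const)\expFcn{(\targetFcn-[\ineqVars]_\ID-\eqVar)/[\ineqVars]_\ID}$, and substituting this back collapses the Lagrangian to $\dual$ in (\ref{eq:worst}). When $[\ineqVars]_\ID=0$ the Lagrangian is affine in $\densRatio$ with $\densRatio$-coefficient $\refProb(\targetFcn-\eqVar)$, so the supremum equals $\eqVar$ if $\targetFcn\le\eqVar$ and $+\infty$ otherwise, producing the remaining two branches of $\dualExtended$ in (\ref{eq:zeroWorst}). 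This establishes (\ref{eq:inner}), hence (i); part (ii) is then immediate from Lemma~\ref{lem:dual}~(ii), because the objective in (\ref{eq:inner}) is exactly the Lagrange dual function just computed.

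For (iii), I would fix $\eqVar$ and minimize the (separable) dual over each $[\ineqVars]_\ID\ge0$. Writing $a=\targetFcn-\eqVar$ and $t=[\ineqVars]_\ID$, the $\ID$-th term is $(1+\const)\,t\,\expFcn{(a-t)/t}$, whose derivative vanishes only at $t=a$ when $a>0$ (minimum value $(1+\const)a$), while for $a\le0$ it is strictly increasing and decreases to $0$ as $t\to0^+$ (matching the value $\eqVar$ of the $\dualExtended$ branch at $[\ineqVars]_\ID=0$). Hence each minimized term equals $(1+\const)\maxFcn{0,\targetFcn-\eqVar}$, and taking the $\refProb$-expectation while restoring the constant $\eqVar$ gives $\innerCVaR$, i.e.\ (\ref{eq:innerCVaR}). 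To identify the minimizer in $\eqVar$, note $\innerCVaR$ is convex in $\eqVar$ with subdifferential $\left[1-(1+\const)\distribution{\refProb}{\targetFcn\ge\eqVar},\,1-(1+\const)\distribution{\refProb}{\targetFcn>\eqVar}\right]$; with $\beta=\const/(1+\const)$ the condition $0\in\partial$ reads $\distribution{\refProb}{\targetFcn>\eqVar}\le1-\beta\le\distribution{\refProb}{\targetFcn\ge\eqVar}$, which $\eqVar=\targetVaR$ satisfies directly from the definition of VaR as an infimum.

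For (iv), I would specialize (i) to the uniform center $\refProb=1/\dimID$, so that $\densRatio\le1+\const$ reads $\prob\le1/\constC$ with $\constC=\dimID/(1+\const)$. The worst-case problem then becomes the linear program of maximizing $\sum_{\ID\in\setID}\prob\,\targetFcn$ over the simplex capped by $\prob\le1/\constC$; a standard exchange argument shows that an optimum places mass $1/\constC$ on the $\constC$ highest-cost indices (total mass $\constC\cdot(1/\constC)=1$), giving the average of the $\constC$ largest costs. Since every distinct tuple in $\ambiguityC$ from (\ref{eq:worstC}) corresponds to such a feasible distribution and conversely, this value equals $\max_{(\ID_1,\,\cdots,\,\ID_\constC)\in\ambiguityC}\sum_{l=1}^{\constC}\targetFcnHead(\xVar,\,\ID_l)/\constC$; equivalently one may evaluate the CVaR form of (iii) at $\eqVar=\targetVaR$, which here is the $\constC$-th largest cost. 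The main obstacle will be the part-(iii) componentwise minimization: handling the exponential stationarity together with the $t\to0^+$ limit so that it glues consistently onto the $[\ineqVars]_\ID=0$ branch of $\dualExtended$, and the discrete subdifferential bookkeeping (strict versus non-strict probabilities) needed to pin down $\targetVaR$ as a genuine minimizer.
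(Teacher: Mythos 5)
Your proposal is correct, and for parts (i)--(ii) it takes essentially the paper's route: rewrite $\ambiguityDR$ via Lemma~\ref{lem:ambiguity} and Remark~\ref{rem:strictlyConvex}, invoke Lemma~\ref{lem:dual} with the center $\refProbHead$ as the strictly feasible point, and evaluate the separable supremum componentwise (the stationary point $\densRatio^*=(1+\const)\expFcn{(\targetFcn-[\ineqVars]_\ID-\eqVar)/[\ineqVars]_\ID}$ for $[\ineqVars]_\ID>0$, the affine analysis for $[\ineqVars]_\ID=0$), exactly as in the paper. Where you genuinely diverge is in (iii) and (iv). For (iii), the paper identifies the optimal multipliers through complementary slackness (forcing $\densRatio^*=1+\const$ and hence $[\ineqVars^*]_\ID=\targetFcn-\eqVar$ on the active components) and then locates the minimizer in $\eqVar$ by computing one-sided derivatives of $\innerCVaRFcn$ and running a contradiction argument; you instead minimize each dual term $(1+\const)\,t\,\expFcn{(a-t)/t}$ by direct calculus --- your derivative $h'(t)=(1+\const)e^{a/t-1}(1-a/t)$ is right, giving the minimum $(1+\const)a$ at $t=a$ for $a>0$ and the infimum $0$ as $t\to0^+$ for $a\le0$, which glues correctly onto the $[\ineqVars]_\ID=0$ branch of $\dualExtended$ --- and then use the subdifferential condition
\begin{equation*}
\distribution{\refProb}{\targetFcn>\eqVar}\le1-\beta\le\distribution{\refProb}{\targetFcn\ge\eqVar},
\end{equation*}
which $\targetVaR$ satisfies by the infimum definition of VaR together with right-continuity of the (finitely supported) distribution function. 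This subdifferential packaging is equivalent to, and cleaner than, the paper's left/right-derivative bookkeeping at the atoms, though the paper's KKT route has the side benefit of exhibiting the worst-case density $\densRatio^*=1+\const$ explicitly. For (iv), the paper derives the claim from (i) and (iii) by evaluating $\innerCVaRFcn$ at $\eqVar=\targetVaR$ with the costs sorted so that $\targetVaR=\targetFcnHead(\xVar,\,\ID_\constC^*)$; your primal argument --- under the uniform center the DR ball is exactly the capped simplex $\{\probHead\mid\prob\le1/\constC\}$, whose linear maximum is attained at an extreme point placing mass $1/\constC$ on $\constC$ distinct indices, i.e.\ the average of the $\constC$ largest costs, in bijection with tuples in $\ambiguityC$ --- is more elementary, independent of (iii), and avoids the tie-handling care the paper needs when asserting $\distribution{\refProb}{\targetFcn\ge\targetFcnHead(\xVar,\,\ID_\constC^*)}=\constC/\dimID$; you also correctly note the paper's CVaR-evaluation route as an alternative.
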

}

\begin{proof}[Proof of Lemma \ref{lem:lagrangeDual}]
\textcolor{black}{
From Lemma \ref{lem:ambiguity}, we obtain the following equation.
\begin{equation*}
\innerHat=\max_{\probHead\in\ambiguityDR}\quad\inner. 
\end{equation*}
Furthermore, from Remark \ref{rem:strictlyConvex}, $\hatAmbiguity$ is a convex set. Additionally, if $\const>0$, $\relint(\hatAmbiguity)$ is a non-empty set. Therefore, there exists a $\probHead\in\relint(\hatAmbiguity)$ that is strictly feasible; that is, it satisfies $\densRatio<1+\const$ and $\densRatio\lnFcn{\densRatio}<\densRatio\lnFcn{1+\const}$ for all $\ID\in\setID$. From Lemmas \ref{lem:dual} (i) and (ii), a strong dual problem can be formulated as follows: 
\begin{equation*}
\inf_{\KKTVars\in\realPlus^\dimID\times\real}\lagrangeDual=\innerHat,
\end{equation*}
\begin{equation*}
\lagrangeDual=\displaystyle\sup_{(\densRatioHead_1,\,\cdots,\,\densRatioHead_\dimID)\in\realPlus^\dimID}\quad\lagrangian,
\end{equation*}
\begin{multline*}
\lagrangian
=\expectation{\refProb}{\densRatio\targetFcn} \\
-\expectation{\refProb}{\densRatio[\ineqVars]_\ID\lnFcn{\frac{\densRatio}{1+\const}}}
+\eqVar(1-\expectation{\refProb}{\densRatio}).
\end{multline*}
$\lagrangianHead: \setXVar\times\realPlus^\dimID\times\real^\dimID\times\real \rightarrow \real$, is the Lagrangian associated with the problem in (\ref{eq:inner}). 
}

\textcolor{black}{
The statement (i) can be proved by considering two cases: $\ID\in\{l\in\setID\mid[\ineqVars]_l>0\}$ and $\ID\in\{l\in\setID\mid[\ineqVars]_l=0\}$ for each $\ID$. 
First, we consider the first case $\ID\in\{l\in\setID\mid[\ineqVars]_l>0\}$. Suppose that $\densRatio^*$ is the maximizer of $\densRatio$. The Lagrangian is concave for $\densRatio$. Then, the gradient in $\densRatio$ must be zero at $\densRatio^*$ as follows:
\begin{equation*}
\refProb\,\targetFcn 
-\refProb\,[\ineqVars]_\ID\left\{\lnFcn{\frac{\densRatio^*}{1+\const}}
-1\right\}-\refProb\eqVar
=0.
\end{equation*}
That being said, because this gradient involves $-\lnFcn{\densRatio^*/\,(1+\const)}$, there is no probability that $\densRatio^*$ lies on the boundary of the domain $\realPlus$. 
Therefore, 
\begin{equation}\label{eq:worstDens}
\densRatio^*=(1+\const)\expFcn{\frac{\targetFcn-[\ineqVars]_\ID-\eqVar}{[\ineqVars]_\ID}}. 
\end{equation}
Hence, the Lagrangian includes the following terms:
\begin{equation*}
\begin{split}
\refProb&\densRatio^*\targetFcn
-
\refProb\densRatio^*[\ineqVars]_\ID\lnFcn{\frac{\densRatio^*}{1+\const}} 
+\refProb\eqVar(1-\densRatio^*) \\
&=\refProb\densRatio^*\left(\targetFcn-\eqVar-[\ineqVars]_\ID\lnFcn{\frac{\densRatio^*}{1+\const}}\right) +\refProb\eqVar\\
&=\refProb(\densRatio^*[\ineqVars]_\ID +\eqVar)\\
&=\refProb\dual.
\end{split}
\end{equation*}
Subsequently, we consider the other case $\ID\in\{l\in\setID\mid[\ineqVars]_l=0\}$. The Lagrangian $\lagrangian$ is affine in $\densRatioHead_\ID$ on $\realPlus$ and its supremum is attained either at $\infty$ or $\refProb\eqVar$ as follows:
\begin{equation*}
\begin{matrix}
\refProb\densRatio\targetFcn+\\
\quad\quad\quad
\refProb\eqVar(1-\densRatio)\in
\end{matrix}
\begin{cases}
[\refProb\eqVar,\,\infty), & \text{($\eqVar<\targetFcn$)}, \\
\{\refProb\eqVar\}, & \text{($\eqVar=\targetFcn$)}, \\
(-\infty,\,\refProb\eqVar], & \text{($\eqVar>\targetFcn$)}. \\
\end{cases}
\end{equation*}
Finally, we can explicitly express both the cases as follows:
\begin{multline*}
\forall\ID\in\setID, \quad \\
 \begin{rcases}
 \refProb\eqVar, & ([\ineqVars]_\ID\ne0,~\targetFcn\ge\eqVar), \\
 \refProb\dual, & ([\ineqVars]_\ID>0), \\
 \infty, & ([\ineqVars]_\ID>0,~\targetFcn<\eqVar), 
 \end{rcases}\\
=\refProb\dualExtended.
\end{multline*}
Therefore, 
\begin{equation*}
\forall\ineqVars\in\realPlus^\dimID, \quad \lagrangeDual = \innerDual.
\end{equation*}
Hence, the statement (i) is proven. 
}

\textcolor{black}{
We also prove the statement (ii) . This statement follows directly from Lemma \ref{lem:dual} (ii). 
}

\textcolor{black}{
Furthermore, we prove the statement (iii). Suppose that $[\ineqVars^*]_\ID$ is the minimizer with respect to $[\ineqVars]_\ID$ for each $\ID$ in $\setID$.
}

\textcolor{black}{
First, we consider the case that $\ID\in\{\ID\in\setID\mid[\ineqVars^*]_\ID=0\}$. Based on this lemma (i), the infimum of $\innerDual$ is equivalent to the maximum of $\inner$ for each $\xVar$. From the definition of the target system, $\inner<\infty$, $\dualExtendedHead(\xVar,\,\ID,\,[\ineqVars^*]_\ID,\,\eqVar)$ must be finite. Hence, 
\begin{multline*}
\forall \ID\in\{\ID\in\setID\mid[\ineqVars^*]_\ID=0\},\quad \\ \targetFcn\le\eqVar, \quad \dualExtendedHead(\xVar,\,\ID,\,0,\,\eqVar)=\eqVar.
\end{multline*}
}

\textcolor{black}{
Second, we consider the case that $\ID\in\{\ID\in\setID\mid[\ineqVars^*]_\ID>0\}$. Based on the complementary slackness condition \cite{boyd2004convex}, the following equation holds:
\begin{equation*}
[\ineqVars^*]_\ID \, \densRatio^* \lnFcn{\frac{\densRatio^*}{1+\const}}=0.
\end{equation*}
From $\ID\in\{\ID\in\setID\mid[\ineqVars^*]_\ID>0\}$, it follows that:
\begin{equation*}
\densRatio^*=1+\const.
\end{equation*}
Therefore, by substituting $\densRatio^*$ in (\ref{eq:worstDens}) into the last equation, the following equation holds: 
\begin{equation*}
\forall \ID\in\{\ID\in\setID\mid[\ineqVars^*]_\ID>0\}, \quad [\ineqVars^*]_\ID = \targetFcn - \eqVar > 0.
\end{equation*}
Furthermore, by substituting the last equation into $\dualExtended$:
\begin{multline*}
\forall \ID\in\{\ID\in\setID\mid[\ineqVars^*]_\ID>0\}, \quad \targetFcn>\eqVar \quad \\
\dualExtendedHead(\xVar,\ID,[\ineqVars^*]_\ID,\eqVar)=(1+\const)\,(\targetFcn-\eqVar)+\eqVar.
\end{multline*}
}

\textcolor{black}{
Finally, by considering both cases, we can explicitly express the infimum of $\dualExtended$ as follows:
\begin{equation*}
\begin{split}
\forall\ID&\in\setID, \quad  \inf_{[\ineqVars]_\ID\in\realPlus} \quad \dualExtended \\
 &=
 \begin{cases}
 \eqVar, &(\targetFcn\le\eqVar), \\
 (1+\const)(\targetFcn-\eqVar)+\eqVar, &(\targetFcn>\eqVar), 
 \end{cases}\\
 &= (1+\const)\maxFcn{0,~\targetFcn-\eqVar}+\eqVar.
\end{split}
\end{equation*}
We then introduce the assumption in the statement (iii) that $1+\const=(1-\beta)^{-1}$ holds. Moreover, we define $\innerCVaRFcnHead:\setXVar\times\real\rightarrow\real$ as follows:
\begin{equation}\label{eq:CVaRFcn}
\begin{split}
\innerCVaRFcn & \coloneqq \innerBetaCVaR, \\
& = \innerCVaR.
\end{split}
\end{equation}
Then, from the last equation, (\ref{eq:inner}) is equivalent to the following equation:
\begin{multline*}
\innerMin \\
=\inf_{\eqVar\in\real}\quad \innerCVaRFcn.
\end{multline*}
}

\textcolor{black}{
$\innerCVaRFcn$ in (\ref{eq:CVaRFcn}) is a weighted sum of $\eqVar$ and $(1-\beta)^{-1}\maxFcn{0,~\targetFcn-\eqVar}$; these are convex in $\eqVar$. Therefore, $\innerCVaRFcn$ is also a convex function. Hence, from the result of \cite[Theorem 25.3]{rockafellar1997convex}, $\innerCVaRFcn$ is differentiable with respect to $\eqVar$ for all but perhaps countably many points over $\real$. Except at the countable set of points $\mathcal{D}\coloneqq\{\eqVar\mid\exists \ID\in\setID,~\targetFcn=\eqVar\}$, from the result of \cite[Lemma in Appendix]{rockafellar2000optimization}, the gradient of $\innerCVaRFcn$ with respect to $\eqVar$ can be computed as follows:
\begin{equation*}
\begin{split}
\forall \eqVar\notin\mathcal{D}, \quad  &\lim_{\delta\rightarrow0} \quad  \frac{\innerCVaRFcnHead(\xVar,\,\eqVar+\delta)-\innerCVaRFcn}{\delta} \\
&=1-(1-\beta)^{-1}\distribution{\refProb}{\targetFcn\ge\eqVar}, \\ 
&= (1-\beta)^{-1}(\distribution{\refProb}{\targetFcn<\eqVar}-\beta).
\end{split}
\end{equation*} 
From the result of \cite[Proposition 2.1]{shapiro1994nondifferentiability}, the right and left derivative of $\innerCVaRFcn$ over $\mathcal{D}$ can be calculated as follows:
\begin{equation*}
\begin{split}
\forall \eqVar\in\mathcal{D}, \quad &\lim_{\delta\rightarrow0^+} \quad  \frac{\innerCVaRFcnHead(\xVar,\,\eqVar+\delta)-\innerCVaRFcn}{\delta}, \\
&= (1-\beta)^{-1}(\distribution{\refProb}{\targetFcn<\eqVar}-\beta) \\
&\propto \distribution{\refProb}{\targetFcn<\eqVar}-\beta, 
\end{split}
\end{equation*}
\begin{equation*}
\begin{split}
\forall \eqVar\in\mathcal{D}, \quad &\lim_{\delta\rightarrow0^-} \quad  \frac{\innerCVaRFcnHead(\xVar,\,\eqVar+\delta)-\innerCVaRFcn}{\delta} \\
&= (1-\beta)^{-1}(\distribution{\refProb}{\targetFcn\le\eqVar}-\beta), \\
&\propto \distribution{\refProb}{\targetFcn\le\eqVar}-\beta.
\end{split}
\end{equation*}
}

\textcolor{black}{
The minimizer of $\eqVar$ to $\innerCVaRFcn$ cannot be found by the first-order optimality condition because $\innerCVaRFcn$ is not differentiable. However, the existence of an extreme value of the convex function $\innerCVaRFcn$ can be established in the following. Assuming that there is no extremum value, the following inequalities must hold.
\begin{multline*}
\nexists \eqVar\in\mathcal{D}, \quad \\ \distribution{\refProb}{\targetFcn<\eqVar}-\beta\ge0, \quad  \distribution{\refProb}{\targetFcn\le\eqVar}-\beta\le0. \\ 
\end{multline*}
However, from the definition of $\beta$-VaR, the following inequalities are satisfied.
\begin{equation*}
\distribution{\refProb}{\targetFcn\le\targetVaR}\ge\beta,
\end{equation*}
\begin{equation*}
\distribution{\refProb}{\targetFcn<\targetVaR}\le\beta.
\end{equation*}
Hence, by replacing $\eqVar$ with $\targetVaR$, the extreme value must exist and contain $\targetVaR$ as follows:
\begin{equation*}
\targetVaR \in \argmin_{\eqVar\in\real} \innerCVaRFcn.
\end{equation*}
Hence, the statement (iii) is proven.
}

\textcolor{black}{
We also prove the statement (iv). The statements (i) and (iii) state that for every $\xVar\in\setXVar$, the worst-case expectation is equivalent to the following problem. 
\begin{multline*}
\max_{\probHead\in\ambiguityDR}\quad\inner 
= (1-\beta)^{-1}\times  \\  
\expectation{\refProb}{\maxFcn{0,~\targetFcn - \targetVaR}} \\
+\targetVaR.
\end{multline*}
We introduce the assumption in the statement (iv) that $\beta=\const\,/(1+\const)$. Let us consider a collection $(\ID_1^*,\,\ID_2^*,\,\cdots,\,\ID_\constC^*)\in\ambiguityC$ that satisfies $\targetFcnHead(\xVar,\,\ID_1^*)\ge\targetFcnHead(\xVar,\,\ID_2^*)\ge\cdots\ge\targetFcnHead(\xVar,\,\ID_\constC^*)\ge\targetFcn$ for all $\ID\in\setID\setminus\{\ID_1^*,\,\ID_2^*,\,\cdots,\,\ID_\constC^*\}$. Then, because $\constC=\dimID\,(1-\beta)$ and the assumption $\refProb=1/\,\dimID$, $\distribution{\refProb}{\targetFcn\ge\targetFcnHead(\xVar,\,\ID_c^*)}=\constC/\,\dimID\ge1-\beta$ and $\distribution{\refProb}{\targetFcn<\targetFcnHead(\xVar,\,\ID_c^*)}\le\beta$ hold.
Therefore, from the definition of $\beta$-VaR , $\targetVaR=\targetFcnHead(\xVar,\,\ID_c^*)$ holds. Hence, the final problem is reformulated as follows:
\begin{equation*}
\begin{split}
&\innerMax \\
&=\frac{1}{\dimID\,(1-\beta)}\,(\targetFcnHead(\xVar,\,\ID_1^*)+\targetFcnHead(\xVar,\,\ID_2^*)+\cdots+\targetFcnHead(\xVar,\,\ID_c^*)), \\
&=\frac{1}{\constC}\,(\targetFcnHead(\xVar,\,\ID_1^*)+\targetFcnHead(\xVar,\,\ID_2^*)+\cdots+\targetFcnHead(\xVar,\,\ID_c^*)).
\end{split}
\end{equation*}
Since $(\ID_1^*,\,\ID_1^*,\,\cdots,\,\ID_c^*)\in\ambiguityC$ and $\targetFcnHead(\xVar,\,\ID_l^*)\ge\targetFcn$ for all $l\in\{1,\,2,\,\cdots,\,c\}$ and for all $\ID\in\setID\setminus(\ID_1^*,\,\ID_1^*,\,\cdots,\,\ID_c^*)$, the statement (iv) is proven.
}
\end{proof}

\textcolor{black}{
\begin{rem}[Proof Ideas of Lemma \ref{lem:lagrangeDual}]
The proof is based on the results of the complementary slackness condition shown in \cite{boyd2004convex}. 
The difference between Lemma \ref{lem:lagrangeDual} (iii) and the results of previous studies \cite[Theorem 1] {rockafellar2000optimization},\cite[Proposition 5.11]{kuhn2024distributionallyrobustoptimization} is the differentiability of $\innerCVaRFcn$ in (\ref{eq:CVaRFcn}). 
The function $\innerCVaRFcn$ is not necessarily differentiable in $\eqVar\in\real$ because there exists some $\alpha\in\real$ that satisfies $\sum_{\ID\in\{\ID\in\setID\mid\targetFcn=\alpha\}}\refProb\ne0$, provided the distribution is discrete. 
\end{rem}
}

\begin{proof}[Proof of Theorem \ref{thm:strongDual}]

\textcolor{black}{ 
We first prove the statement (i). Lemma \ref{lem:lagrangeDual} (i) states that the maximum value of (\ref{eq:DROC}) associated with (\ref{eq:ambiguity}) is equal to the infimum value of (\ref{eq:dualDROC}) for each $\xVar$ in $\setXVar$. This yields the first statement. 
}

\textcolor{black}{
Subsequently, we prove the statement (ii). By naturally extending the results in \cite[Section 3.2.3]{boyd2004convex} to strictly convex functions, the objective function of (\ref{eq:DROC}) is strictly convex on $\setXVar$ if $\targetFcn$ is strictly convex. Therefore, the set of minimizers to the problem contains at most one point \cite[Section 4.2.1]{boyd2004convex}. In addition, the extreme value theorem \cite{keisler2012elementary} guarantees that the set of minimizers contains at least one point if $\targetFcn$ is continuous and $\setXVar$ is bounded and closed convex. Hence, it must be unique.
}

\textcolor{black}{
Finally, the statement (iii) is proved. Lemma \ref{lem:lagrangeDual} (ii) states that the objective function $\innerDual$ is convex on $\setXVar\times\realPlus^\dimID\times\real$ if $\targetFcn$ is convex. In addition, the function $\dual$ is clearly of class $C^k$ because $\targetFcn$ is of class $C^k$ on $\setXVar\times\realPlusPlus^\dimID\times\real$. Hence, the statement (iii) is proven.   
}
\end{proof}

\begin{proof}[Proof of Theorem \ref{thm:CVaR}]
\textcolor{black}{
From Lemma \ref{lem:lagrangeDual} (i) and (iii), the statement (i) can be proved by explicitly showing that $\innerCVaRFcnHead(\xVar,\,\targetVaR)$ is equal to $\targetTildeCVaR$ that satisfies (\ref{eq:tildeCVaR}). To simplify the notation, we use the following notation:
\begin{equation*}
\alpha_\beta(\xVar) \coloneqq \targetVaR.
\end{equation*}
The following equation holds.
\begin{equation*}
\innerMax = \innerCVaRFcnHead(\xVar,\,\alpha_\beta(\xVar)). 
\end{equation*}
}

\textcolor{black}{
We verify that $\innerCVaRFcnHead(\xVar,\,\alpha_\beta(\xVar))$ satisfies the property of $\targetTildeCVaR$, denoted in (\ref{eq:tildeCVaR}). $\innerCVaRFcnHead(\xVar,\,\alpha_\beta(\xVar))$ can be calculated as follows: 
\begin{multline*}
\innerCVaRFcnHead(\xVar,\,\alpha_\beta(\xVar)) \\
=(1-\beta)^{-1}\,\expectation{\refProb}{\maxFcn{0,~\targetFcn-\alpha_\beta(\xVar)}} \\
+\alpha_\beta(\xVar). 
\end{multline*}
From the definition of $\beta$-VaR, $\distribution{\refProb}{\targetFcn\le\alpha_\beta(\xVar)}\ge\beta$ and $\distribution{\refProb}{\targetFcn<\alpha_\beta(\xVar)}\le\beta$ are immediately satisfied. Therefore, $\distribution{\refProb}{\targetFcn>\alpha_\beta(\xVar)}\le1-\beta$ and $\distribution{\refProb}{\targetFcn\ge\alpha_\beta(\xVar)}\ge1-\beta$ are satisfied. 
Therefore, the first expectation on the right-hand side of the last equation satisfies the following equation: 
\begin{equation}\label{eq:CVaRInequality}
\begin{split}
&\expectation{\refProb}{\targetFcn-\alpha_\beta(\xVar)\mid\targetFcn>\alpha_\beta(\xVar)} \\
&\quad\ge(1-\beta)^{-1}\,\sum_{\ID\in\{\ID\in\setID\mid\targetFcn>\alpha_\beta(\xVar)\}} \refProb(\targetFcn-\alpha_\beta(\xVar)) \\
&\quad=(1-\beta)^{-1}\,\expectation{\refProb}{\maxFcn{0,~\targetFcn-\alpha_\beta(\xVar)}} \\
&\quad=(1-\beta)^{-1}\,\sum_{\ID\in\{\ID\in\setID\mid\targetFcn\ge\alpha_\beta(\xVar)\}} \refProb(\targetFcn-\alpha_\beta(\xVar)) \\
&\quad\ge\expectation{\refProb}{\targetFcn-\alpha_\beta(\xVar)\mid\targetFcn\ge\alpha_\beta(\xVar)}.  
\end{split}
\end{equation} 
Therefore:
\begin{equation*}
\begin{split}
&\expectation{\refProb}{\targetFcn-\alpha_\beta(\xVar)\mid\targetFcn>\alpha_\beta(\xVar)}+\alpha_\beta(\xVar) \\
&\quad\ge \innerCVaRFcnHead(\xVar,\,\alpha_\beta(\xVar)) \\
&\quad\ge\expectation{\refProb}{\targetFcn-\alpha_\beta(\xVar)\mid\targetFcn\ge\alpha_\beta(\xVar)}+\alpha_\beta(\xVar).  
\end{split}
\end{equation*}
Therefore: 
\begin{equation*}
\begin{split}
&\expectation{\refProb}{\targetFcn\mid\targetFcn>\alpha_\beta(\xVar)} \\
&\quad\ge \innerCVaRFcnHead(\xVar,\,\alpha_\beta(\xVar)) \\
&\quad\ge\expectation{\refProb}{\targetFcn~|~\targetFcn\ge\alpha_\beta(\xVar)}.   
\end{split}
\end{equation*}
Therefore: 
\begin{multline*}
\targetHatCVaR 
\ge \innerCVaRFcnHead(\xVar,\,\alpha_\beta(\xVar)) \\
\ge \targetCVaR.
\end{multline*}
Hence, $\innerCVaRFcnHead(\xVar,\,\alpha_\beta(\xVar))$ clearly satisfies the property of $\targetTildeCVaR$ and the statement (i) is proven. 
}

\textcolor{black}{
We prove the statement (ii) as follows.  
If $\distribution{\refProb}{\targetFcn\le\alpha_\beta(\xVar)}=\beta$  is satisfied, the following equation holds in (\ref{eq:CVaRInequality}). 
\begin{multline*}
\expectation{\refProb}{\targetFcn-\alpha_\beta(\xVar)\mid\targetFcn>\alpha_\beta(\xVar)} \\
\quad=(1-\beta)^{-1}\,\expectation{\refProb}{\maxFcn{0,~\targetFcn-\alpha_\beta(\xVar)}}.
\end{multline*}
Therefore, by adding $\alpha_\beta(\xVar)$ to both sides in the last equation, the following equation holds.
\begin{equation*}
\targetHatCVaR = \innerCVaRFcnHead(\xVar,\,\alpha_\beta(\xVar)). 
\end{equation*}
Hence, the statement is proven.
}
\end{proof}

\begin{proof}[Proof of Theorem \ref{thm:knapsack}]
\textcolor{black}{
Lemma \ref{lem:lagrangeDual} (iv) states that for every $\xVar\in\setXVar$, the DDRO problem in (\ref{eq:DROC}) associated with the DR ball (\ref{eq:ambiguity}) is equivalent to the deterministic RC problem in (\ref{eq:knapsackRC2}) associated with the worst $\constC$ costs.  
}
\end{proof}

\section{Numerical Experiments}

\textcolor{black}{
This section presents numerical experiments on patroller-agent design problems, which have been extended from the worst-case mean hitting time minimization \cite{diaz2023distributed}, and formulated as DDRO problems. We compare the performance of the proposed DDRO method $(\const > 0)$ with that of the conventional SOC method.
}


\subsection{Settings of Patroller-Agent Design}

\textcolor{black}{
We introduce patrolling tasks in which a robotic patroller agent is assigned to visit different locations continuously.
}

\textcolor{black}{
We consider a finite undirected graph $\mathcal{G}(\setID,\,\mathcal{E})$ and a discrete-time Markov chain that models the transitions of a patroller agent state. The agent's state at time $t\in\{0,\,1,\,\cdots\}$ is denoted by  $X_t\in\setID$, where $\setID=\{1,\,2,\,\cdots,\,\dimID\}$ is the set of nodes (states), and $\mathcal{E}\subseteq\setID\times\setID$ is the set of edges (connections between the states).  
}

\textcolor{black}{
The Markov chain is characterized by a transition matrix $\boldsymbol{P}\in\real^{\dimID\times\dimID}$, where the component $[\boldsymbol{P}]_{j,\,k}$  denotes the transition probability from state $j$ to state $k$: $[\boldsymbol{P}]_{j,\,k}=\distribution{}{X_t=k \mid X_{t-1}=j}$. 
The chain satisfies the Markov  property: namely,  $\distribution{}{X_t=j_t \mid X_{t-1}=j_t-1}=\distribution{}{X_t=j_t \mid X_{t-1}=j_{t-1},\, \cdots,\, X_{0}=j_{0}}$ for all $j_t\in\setID$.
}

\textcolor{black}{
We introduce the mean hitting time minimization problem for a patrolling agent on a graph as studied in \cite{diaz2023distributed}. In a Markov chain, the mean hitting time is defined as the expected time for the patrolling agent to first reach a designated goal states $\mathcal{A}(\ID)\coloneq\{\ID\}\subseteq\setID$, as defined in \cite[Equation (3)]{diaz2023distributed}: 
\begin{equation}\label{eq:hittingTime}
\targetFcn=\boldsymbol{\pi}^\top(\eye_{\dimID} - \boldsymbol{E}_{\mathcal{A}}(\ID) \boldsymbol{P} \boldsymbol{E}_{\mathcal{A}}(\ID))\boldsymbol{\delta}_{\mathcal{A}}(\ID). 
\end{equation}
Here, $\xVar=\vect(\boldsymbol{P})$ is the decision variable, and $\boldsymbol{\delta}_{\mathcal{A}}(\ID)\in\real^\dimID$ is a vector valued in $\{0,1\}$. The component $[\boldsymbol{\delta}_{\mathcal{A}}(\ID)]_j=1$ if $j\notin\mathcal{A}(\ID)$; otherwise, $[\boldsymbol{\delta}_{\mathcal{A}}(\ID)]_j=0$. Futhermore, $\boldsymbol{E}_{\mathcal{A}}(\ID)=\diag(\boldsymbol{\delta}_{\mathcal{A}}(\ID))\in\real^{\dimID\times\dimID}$. 
}

\textcolor{black}{
Let $\mathcal{M}_{\boldsymbol{\pi},\,\mathcal{E}}^*$ denote the set of irreducible and reversible stochastic matrices with a particular stationary distribution $\boldsymbol{\pi}\in\realPlus^\dimID$. Suppose that $\boldsymbol{P} \in \mathcal{M}_{\boldsymbol{\pi},\,\mathcal{E}}^*$. We restrict the connections in all matrices $\boldsymbol{P}\in\mathcal{M}_{\boldsymbol{\pi},\,\mathcal{E}}^*$ to $[P]_{j,\,k}=0$ for all $(j,\,k)\notin\mathcal{E}$. The component $[\boldsymbol{\pi}]_j$ represents the long-run proportion of time the patroller spends in state $j\in\setID$. From the results in \cite[Lemma 3.1]{diaz2023distributed}, the mean hitting time $\targetFcn$ is convex in $\xVar$ over $\mathcal{M}_{\boldsymbol{\pi},\,\mathcal{E}}^*$. 
}

\textcolor{black}{
}

\subsection{DDRO problems of Patroller-Agent Design}

\begin{figure}[t]
   \centering
   \includegraphics[scale=0.85,clip]{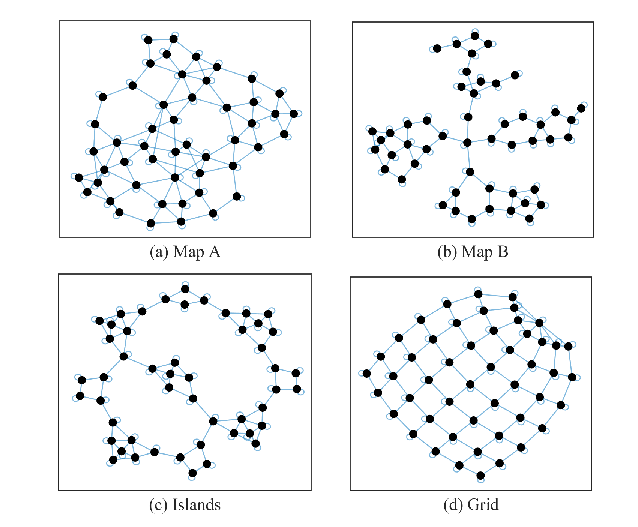}
   \caption{Graphs of the four topologies in \cite[Fig.\,3]{almeida2004recent}. In the figure, the black dots represent the nodes and the solid lines represent the edges.}
   \label{figurelabel}
\end{figure}

\textcolor{black}{
Consider the DDRO problem in (\ref{eq:DROC}), where the cost function is defined as the mean hitting time $\targetFcn$ in (\ref{eq:hittingTime}). Originally, the objective was to minimize the worst-case mean hitting time or the weighted sum of the mean hitting time \cite[Equation (5) and (6)]{diaz2023distributed}.
However, these weights for all goal states $\mathcal{A}(\ID)\subseteq\setID$ are often difficult to assign because noteworthy nodes are not known in prior. Instead of assigning these weights, we formulate the DDRO patroller-agent design problem by treating a probability distribution $\prob$ as the weights for all $\mathcal{A}(\ID)\subseteq\setID$: 
\begin{equation}\label{eq:hittingTimeDROC}
\begin{split}
&\min_{\boldsymbol{P}\in\mathcal{M}_{\boldsymbol{\pi},\,\mathcal{E}}^*} \quad \max_{\probHead\in\ambiguity} \quad \expectation{\prob}{\boldsymbol{\pi}^\top(\eye_{\dimID} - \boldsymbol{E}_{\mathcal{A}}(\ID) \boldsymbol{P} \boldsymbol{E}_{\mathcal{A}}(\ID) \boldsymbol{\delta}_{\mathcal{A}}(\ID)}. 
\end{split}
\end{equation}
We then consider two types of $\ambiguity$: the weighted L2 ball $\ambiguity=\ambiguityL$ defined in (\ref{eq:ambiguityL}) and the DR ball $\ambiguity=\ambiguityDR$ defined in (\ref{eq:ambiguity}). 
In both cases, the center of the ball $\refProb$ is set to a uniform distribution. 
If we choose $\ambiguity$ as the weighted L2 ball in (\ref{eq:ambiguityL}), the DDRO patroller-agent design problem is equivalent to the expectation- and standard-deviation-based problem described in Theorem {\ref{thm:variance}}: 
\begin{equation}\label{eq:hittingTimeVariance}
\begin{split}
\min_{\boldsymbol{P}\in\mathcal{M}_{\boldsymbol{\pi},\,\mathcal{E}}^*} 
&
 \expectation{\refProb}{\boldsymbol{\pi}^\top(\eye_{\dimID} - \boldsymbol{E}_{\mathcal{A}}(\ID) \boldsymbol{P} \boldsymbol{E}_{\mathcal{A}}(\ID) \boldsymbol{\delta}_{\mathcal{A}}(\ID)} \\ 
&+\const\sqrt{\variance{\refProb}{\boldsymbol{\pi}^\top(\eye_{\dimID} - \boldsymbol{E}_{\mathcal{A}}(\ID) \boldsymbol{P} \boldsymbol{E}_{\mathcal{A}}(\ID) \boldsymbol{\delta}_{\mathcal{A}}(\ID)}}. 
\end{split}
\end{equation}
If the size of ball $\const$ is small, the solutions to the last problem are Pareto-optimal solutions to the following multi-objective optimization problem, as described in Corollary {\ref{cor:variance}}: 
\begin{equation}
\begin{split}
\min_{\boldsymbol{P}\in\mathcal{M}_{\boldsymbol{\pi},\,\mathcal{E}}^*} 
\Bigg\{&
 \expectation{\refProb}{\boldsymbol{\pi}^\top(\eye_{\dimID} - \boldsymbol{E}_{\mathcal{A}}(\ID) \boldsymbol{P} \boldsymbol{E}_{\mathcal{A}}(\ID) \boldsymbol{\delta}_{\mathcal{A}}(\ID)}, ~ \\ 
&\sqrt{\variance{\refProb}{\boldsymbol{\pi}^\top(\eye_{\dimID} - \boldsymbol{E}_{\mathcal{A}}(\ID) \boldsymbol{P} \boldsymbol{E}_{\mathcal{A}}(\ID) \boldsymbol{\delta}_{\mathcal{A}}(\ID)}}
\Bigg\}. 
\end{split}
\end{equation}
}

\textcolor{black}{
If $\ambiguity$ is the DR ball in (\ref{eq:ambiguity}), the DDRO problem becomes equivalent to the CVaR minimization problem described in Theorem \ref{thm:CVaR}:
\begin{equation}\label{eq:hittingTimeCVaR}
\begin{split}
&\min_{\boldsymbol{P}\in\mathcal{M}_{\boldsymbol{\pi},\,\mathcal{E}}^*} \quad \hatCVaR{\beta}{\refProb}{\boldsymbol{\pi}^\top(\eye_{\dimID} - \boldsymbol{E}_{\mathcal{A}}(\ID) \boldsymbol{P} \boldsymbol{E}_{\mathcal{A}}(\ID) \boldsymbol{\delta}_{\mathcal{A}}(\ID)}. 
\end{split}
\end{equation}
Here, $\beta\text{-}\hat{\text{CVaR}}$ represents the conditional expectation of the mean hitting time exceeding $\beta$-VaR, where  $\beta$-VaR denotes the mean hitting time to the worst nodes with probability greater than $\beta$. Theorems {\ref{thm:variance}} and {\ref{thm:CVaR}} state that the size of ball, $\const$, controls the weight parameter $\const$ in the standard deviation based formulation, and the probability level $\beta=\const/\,(1+\const)$ in the CVaR formulation. Furthermore, Theorems \ref{thm:strongDualL} and \ref{thm:strongDual} state that the DDRO problem can be reformulated as a single-layer smooth convex optimization problem, (\ref{eq:dualDROCL}) and (\ref{eq:dualDROC}), because $\targetFcn$ in (\ref{eq:hittingTime}) is convex and smooth. 
}

\textcolor{black}{
Theorem \ref{thm:knapsack} provides an alternative interpretation of the DDRO problem in (\ref{eq:hittingTimeCVaR}) as a  deterministic RC problem involving multiple worst-case nodes. Specifically, this formulation assumes that the reference distribution $\refProb$ is uniform, as stated in Theorem \ref{thm:knapsack}.
}

\textcolor{black}{
We consider the graphs presented in  \cite[Fig.\,3]{almeida2004recent}, which include four different topologies with $|\setID|=\dimID=50$, as shown in Fig. \ref{figurelabel}. The stationary distribution $\boldsymbol{\pi}$ of the patroller agent's Markov chain is set to the uniform distribution.
}

\subsection{Verification of Solvability and Interpretability}

\begin{figure}[t]
   \centering
   \includegraphics[scale=1.0,clip]{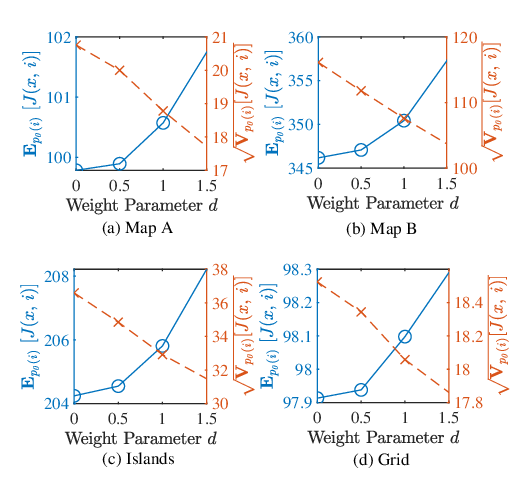}
   \caption{
Results of the expectation and the standard deviation of mean hitting time. The solid lines represents the expectation of mean hitting time $\expectation{\refProb}{\targetFcn}$, whereas the dashed lines represent the standard deviation $\sqrt{\variance{\refProb}{\targetFcn}}$. 
   }
   \label{figure:result}
\end{figure}

\begin{figure}[t]
   \centering
   \includegraphics[scale=1,clip]{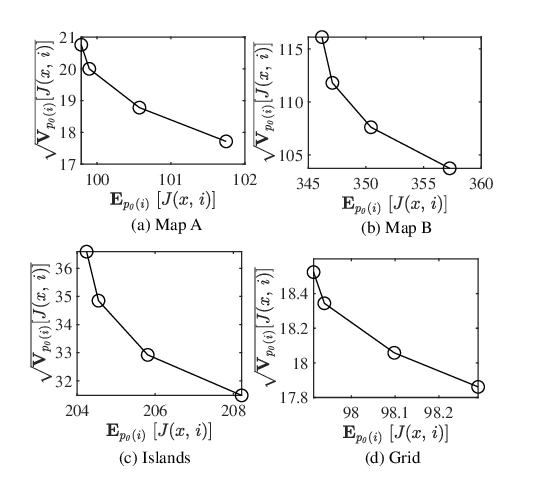}
   \caption{
Pareto front of the expectation and the standard deviation of mean hitting time. The circles represent Pareto-optimal solutions that the proposed method have found. The horizontal axis is the expectation of mean hitting time $\expectation{\refProb}{\targetFcn}$, while the vertical axis is the standard deviation $\sqrt{\variance{\refProb}{\targetFcn}}$.  
   }
   \label{figure:multi}
\end{figure}

\begin{table}[t]
\caption{Results of $\beta^{\prime}$-CVaR of mean hitting time corresponds to each target probability level $\beta^{\prime}$. Here, $\beta$ is a designed probability level used in the proposed method.}
\label{table:resultCVaR}
\begin{center}
\begin{tabular}{l||c|c|c||c}
\hline

\multicolumn{5}{l}{Map A ($|\setID|\,(=\dimID)=50,~|\mathcal{E}|=154$)}  \\
\hline
\multirow{2}{*}{Probability, $\beta^{\prime}$} & \multicolumn{3}{c||}{Proposed Method} & SOC \\ \cline{2-4}
    & $\beta=0.98$ & $\beta=0.75$ & $\beta=0.50$ & Method \\

\hline
$\beta^{\prime}=0.98$ & $\boldsymbol{140.0}$ & $154.1$ & $157.0$ & $159.5$ \\
\hline
$\beta^{\prime}=0.75$ & $129.5$ & $\boldsymbol{124.3}$ & $125.1$ & $126.5$ \\
\hline
$\beta^{\prime}=0.50$ & $120.8$ & $116.2$ & $\boldsymbol{115.8}$ & $116.9$ \\
\hline
$\beta^{\prime}=0$ & $104.3$ & $101.3$ & $100.4$ & $\boldsymbol{99.7}$ \\
\hline \hline

\multicolumn{5}{l}{Map B ($|\setID|\,(=\dimID)=50,~|\mathcal{E}|=118$)}  \\
\hline
\multirow{2}{*}{Probability, $\beta^{\prime}$} & \multicolumn{3}{c||}{Proposed Method} & SOC \\ \cline{2-4}
    & $\beta=0.98$ & $\beta=0.75$ & $\beta=0.50$ & Method \\

\hline
$\beta^{\prime}=0.98$ & $\boldsymbol{576.6}$ & $644.1$ & $656.5$ & $669.5$ \\
\hline
$\beta^{\prime}=0.75$ & $523.7$ & $\boldsymbol{479.2}$ & $489.0$ & $499.8$ \\
\hline
$\beta^{\prime}=0.50$ & $467.0$ & $435.2$ & $\boldsymbol{425.9}$ & $429.2$ \\
\hline
$\beta^{\prime}=0$ & $386.3$ & $363.5$ & $348.8$ & $\boldsymbol{346.1}$ \\
\hline \hline

\multicolumn{5}{l}{Islands ($|\setID|\,(=\dimID)=50,~|\mathcal{E}|=132$)}  \\
\hline
\multirow{2}{*}{Probability, $\beta^{\prime}$} & \multicolumn{3}{c||}{Proposed Method} & SOC \\ \cline{2-4}
    & $\beta=0.98$ & $\beta=0.75$ & $\beta=0.50$ & Method \\

\hline
$\beta^{\prime}=0.98$ & $\boldsymbol{252.8}$ & $259.8$ & $263.6$ & $265.7$ \\
\hline
$\beta^{\prime}=0.75$ & $245.9$ & $\boldsymbol{243.5}$ & $246.2$ & $250.2$ \\
\hline
$\beta^{\prime}=0.50$ & $237.4$ & $233.0$ & $\boldsymbol{231.6}$ & $233.7$ \\
\hline
$\beta^{\prime}=0$ & $211.4$ & $209.1$ & $205.6$ & $\boldsymbol{204.2}$ \\
\hline \hline

\multicolumn{5}{l}{Grid ($|\setID|\,(=\dimID)=50,~|\mathcal{E}|=141$)}  \\
\hline
\multirow{2}{*}{Probability, $\beta^{\prime}$} & \multicolumn{3}{c||}{Proposed Method} & SOC \\ \cline{2-4}
    & $\beta=0.98$ & $\beta=0.75$ & $\beta=0.50$ & Method \\

\hline
$\beta^{\prime}=0.98$ & $\boldsymbol{146.6}$ & $152.0$ & $153.2$ & $154.2$ \\
\hline
$\beta^{\prime}=0.75$ & $125.4$ & $\boldsymbol{120.8}$ & $121.1$ & $121.4$ \\
\hline
$\beta^{\prime}=0.50$ & $116.3$ & $112.0$ & $\boldsymbol{111.8}$ & $121.1$ \\
\hline
$\beta^{\prime}=0$ & $101.1$ & $98.1$ & $98.0$ & $\boldsymbol{97.9}$ \\
\hline
\end{tabular}
\end{center}
\end{table}

\textcolor{black}{
We employ the fmincon function in MATLAB \cite{coleman1999optimization} to solve the SOC and DDRO problems. As described in Theorems \ref{thm:strongDualL} and \ref{thm:strongDual}, such solvers are capable of obtaining globally optimal solutions for general smooth convex optimization problems, including those defined by the weighted L2 and DR balls in (\ref{eq:dualDROCL}) and (\ref{eq:dualDROC}), provided that the Lagrange multiplier is non-negative.
To find globally optimal solutions that satisfy the non-negativity of the Lagrange multiplier, we use the logarithmic barrier function, $-0.1\ln(\ineqVar)$, as described in \cite[Section 11.2.1]{boyd2004convex}.
}

\textcolor{black}{
From the results in Fig. \ref{figure:result}, we can observe that the weight parameter $\const$ can balance the expectation and standard deviation of the mean hitting time.  In particular, when $\const$ is in the range of  $0\text{--}1.5$, 
Fig. \ref{figure:multi} shows that the proposed method effectively obtained the Pareto front characterized by up to a 3\% change in the expected mean hitting time and up to a 14\% change in its standard deviation. This provides supporting evidence for the interpretability of the weighted L2 ball as described in Theorem \ref{thm:variance}. 
}

\textcolor{black}{
From the results in Table \ref{table:resultCVaR}, we confirm that the proposed method effectively solves the $\beta$-CVaR minimization problem for each value of $\beta$, as described in Theorem \ref{thm:CVaR} (i) (interpretability of the DR ball). Notably, the method achieves significant improvement in the $98\%$-CVaR ($\beta^\prime=0.98$) of the mean hitting time on Map B, reducing it by  $69.9$ steps, which corresponds to a $13\%$ decrease when compared to the conventional SOC method.
}

\section{Conclusions}
\textcolor{black}{
In this study, we propose DDRO problems associated with two types of uncertainty sets: weighted L2 balls and density-ratio balls. The sizes of these balls are determined by the trade-off parameter between the expected performance and variability of performance, and the probability level that provides the worst-case cost exceeding a certain threshold.  Furthermore, the proposed method is reduced to single-layer smooth convex programming problems with only the constraint of non-negativity of the Lagrange multiplier. The numerical experiments on the DDRO patroller-agent design problems, associated with the defined balls, demonstrated the practical applicability of the proposed method by identifying a Pareto front with respect to the mean and standard deviation of the mean hitting time, and achieving a reduction in CVaR. 
}

\textcolor{black}{
This study focuses on DDRO problems without constraints related to distributional uncertainties. Problems involving distributionally robust constraints remain important topics for future studies. 
Another challenge is analyzing the regret bounds\cite{9992705} of a distributionally robust optimal controller. 
}

\textcolor{black}{
Beyond the distributionally robust optimization setting addressed in this study, the proposed method has the potential to be extended to other stochastic control problems that consider both performance and variability, such as risk-sensitive controls. This approach is particularly applicable to complex numerical optimization tasks involving multi-objective formulations that balance the expected performance and its variability. 
}

\addtolength{\textheight}{-12cm}   




\section*{APPENDIX}
\textcolor{black}{
\begin{prop}[Inclusion Relationship Between the Balls]\label{prop:inclusion}
The weighted L2 ball in (\ref{eq:ambiguityL}), DR ball in (\ref{eq:ambiguity}), and TV ball in (\ref{eq:ambiguityTV}) satisfy the following properties:
\begin{enumerate}
\renewcommand{\labelenumi}{(\roman{enumi})}
\item The weighted L2 ball (\ref{eq:ambiguityL}) is a subset of the TV ball (\ref{eq:ambiguityTV}).
\item The DR ball (\ref{eq:ambiguity}) is a subset of both the weighted L2 ball (\ref{eq:ambiguityL}) and TV ball (\ref{eq:ambiguityTV}) if $\const\ge1$.
\end{enumerate}
\end{prop}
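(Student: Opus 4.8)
The plan is to establish part (i) by a single norm inequality, then obtain part (ii) from one new inclusion, getting the remaining inclusion for free by transitivity rather than proving it separately.

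For part (i) I would use that $\refProbHead$ is a probability mass function, so $\expectation{\refProb}{1}=1$, and apply the Cauchy--Schwarz inequality under $\refProbHead$ to the pair $|\densRatio-1|$ and the constant $1$:
\begin{multline*}
\expectation{\refProb}{|\densRatio-1|}\le\sqrt{\expectation{\refProb}{(\densRatio-1)^2}}\,\sqrt{\expectation{\refProb}{1}}\\=\sqrt{\expectation{\refProb}{(\densRatio-1)^2}}.
\end{multline*}
Hence any $\probHead$ satisfying the weighted L2 constraint $\sqrt{\expectation{\refProb}{(\densRatio-1)^2}}\le\const$ automatically satisfies the TV constraint $\expectation{\refProb}{|\densRatio-1|}\le\const$, which is exactly $\ambiguityL\subseteq\ambiguityTV$.

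For part (ii) I would prove only $\ambiguityDR\subseteq\ambiguityL$ directly, since chaining it with part (i) then yields $\ambiguityDR\subseteq\ambiguityTV$ at no extra cost. Fixing $\probHead\in\ambiguityDR$, the defining bound gives $0\le\densRatio\le1+\const$ for every $\ID\in\setID$, and the normalization of $\probHead$ supplies $\expectation{\refProb}{\densRatio}=\sum_{\ID\in\setID}\prob=1$. The key step is the pointwise estimate $\densRatio^2\le(1+\const)\densRatio$, valid precisely because $0\le\densRatio\le1+\const$; taking $\expectation{\refProb}{\cdot}$ and using the normalization turns this into $\expectation{\refProb}{\densRatio^2}\le1+\const$. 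Writing $\expectation{\refProb}{(\densRatio-1)^2}=\expectation{\refProb}{\densRatio^2}-1$ (again via $\expectation{\refProb}{\densRatio}=1$) then gives $\expectation{\refProb}{(\densRatio-1)^2}\le\const$.

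The hypothesis $\const\ge1$ enters only at the very end: it guarantees $\const\le\const^2$, so $\expectation{\refProb}{(\densRatio-1)^2}\le\const\le\const^2$ and hence $\probHead\in\ambiguityL$; combining with part (i) gives $\ambiguityDR\subseteq\ambiguityL\subseteq\ambiguityTV$. I expect no deep obstacle, as the argument reduces to a single second-moment computation. The one point worth flagging is the quadratic-to-linear bound $\densRatio^2\le(1+\const)\densRatio$ together with $\expectation{\refProb}{\densRatio}=1$, which is what converts the second moment into exactly the constant $\const$ and explains why $\const\ge1$ is the sharp threshold; recognizing that the TV inclusion follows by transitivity rather than a fresh estimate is the other simplification to keep in mind.
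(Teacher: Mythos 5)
Your proof is correct, and it departs from the paper's in both parts in ways worth noting. For (i) you invoke Cauchy--Schwarz against the constant function $1$, while the paper applies Jensen's inequality to the concave square root; these are interchangeable routes to $\expectation{\refProb}{|\densRatio-1|}\le\sqrt{\expectation{\refProb}{(\densRatio-1)^2}}$, so the content is identical. The genuine divergence is in (ii): the paper argues pointwise --- from $0\le\densRatio\le1+\const$ together with $\const\ge1$ it gets $|\densRatio-1|\le\const$ for every $\ID$, hence the L2 bound --- whereas you average first, using the quadratic-to-linear estimate $\densRatio^2\le(1+\const)\densRatio$ and the normalization $\expectation{\refProb}{\densRatio}=1$ to obtain $\expectation{\refProb}{(\densRatio-1)^2}\le\const$ with no hypothesis on $\const$ at all, deferring $\const\ge1$ to the final step $\const\le\const^2$. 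Your version is strictly more informative: it shows the DR ball of radius $\const$ sits inside the weighted L2 ball of radius $\sqrt{\const}$ for every $\const>0$, and the bound $\expectation{\refProb}{(\densRatio-1)^2}\le\const$ is attained by extremal ratios $\densRatio\in\{0,\,1+\const\}$, which substantiates your sharpness remark; the paper's pointwise route needs $\const\ge1$ from the outset and yields only the weaker second-moment bound $\const^2$. Both arguments obtain the TV inclusion by chaining through (i) rather than a fresh estimate. One further point in your favor: the paper's displayed implication in (ii) concludes $\expectation{\refProb}{\sqrt{(\densRatio-1)^2}}\le\const$, which --- the root sitting inside the expectation --- is literally the TV condition, not the L2 one, so as printed the paper's deduction of $\ambiguityDR\subseteq\ambiguityL$ rests on a misplaced square root; your second-moment computation supplies exactly the estimate $\sqrt{\expectation{\refProb}{(\densRatio-1)^2}}\le\const$ that the intended argument requires.
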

}

\begin{proof}[Proof of Proposition \ref{prop:inclusion}]
\textcolor{black}{
Using Jensen's inequality \cite[Section 3.1.8]{boyd2004convex}, and the concavity of the square root function, we obtain the following relationship between weighted L2 and TV distances:
\begin{equation}
\begin{split}
\sqrt{\expectation{\refProb}{(\densRatio-1)^2}} &  \ge \expectation{\refProb}{\sqrt{(\densRatio-1)^2}} \\ &=\expectation{\refProb}{|\densRatio-1|}.
\end{split} 
\end{equation}
Hence, the statement (i) is proven. 
}

\textcolor{black}{ 
Moreover, any distribution within the DR ball (\ref{eq:ambiguity}) belongs to the weighted L2 ball 
if $\const\ge1$ because:
\begin{equation*}
\forall\ID\in\setID, \quad \densRatio\le1+\const \Rightarrow \expectation{\refProb}{\sqrt{(\densRatio-1)^2}}\le\const. 
\end{equation*}
This fact and the statement (i) imply that the DR ball $\ambiguityDR$ in (\ref{eq:ambiguity}) is a subset of both the weighted L2 ball $\ambiguityL$ in (\ref{eq:ambiguityL}) and the TV ball $\ambiguityTV$ in (\ref{eq:ambiguityTV}). Hence, the statement (ii) is proven. 
}
\end{proof}





\bibliography{DDROC2025}
\bibliographystyle{IEEEtran}

\end{document}